\newtheorem{thm}{Theorem}[section]
\newtheorem{lemma}[thm]{Lemma}
\newtheorem{prop}[thm]{Proposition}
\newtheorem{cor}[thm]{Corollary}
\theoremstyle{definition}
\newtheorem{rmk}{Remark}[thm]
\newcommand{\cg}[1]{\mathscr{#1}}
\newcommand{\mb}[1]{\mathbold{#1}}
\newcommand{\Z}{\mathbb{Z}}
\newcommand{\proj}{\mathbb{P}}
\newcommand{\aff}{\mathbb{A}}
\newcommand{\oh}{\mathscr{O}}
\newcommand{\Spec}{\operatorname{Spec}}
\newcommand{\Proj}{\operatorname{Proj}}
\newcommand{\Sym}{\operatorname{Sym}}
\newcommand{\Hom}{\operatorname{Hom}}
\newcommand{\End}{\operatorname{End}}
\newcommand{\id}{\operatorname{id}}
\newcommand{\im}{\operatorname{im}}
\newcommand{\M}{\mb{M}}
\newcommand{\rd}{\mathrm{red}}
\newcommand{\tgt}{\operatorname{Tan}}
\newcommand{\epi}{\twoheadrightarrow}
\newcommand{\mono}{\hookrightarrow}
\newcommand{\nra}[1]{\xrightarrow{#1}}
\title{Motives of Certain Hyperplane Sections of Milnor Hypersurfaces}
\author{Evan Marth\footnote{supported by NSERC Discovery grant RGPIN-2022-03060 and an Ontario Graduate Scholarship} }
\begin{document}

\maketitle

\begin{abstract}
  We construct a hyperplane section $Y$ of a Milnor hypersurface associated to a regular semisimple endomorphism $\varphi$. Exploiting its structure as a hyperplane section 
  of a projective bundle and its natural torus action, we give a motivic decomposition of $Y$, which encodes both the 
  cellular structure of $Y$ and the arithmetic of the eigenvalues of $\varphi$. This decomposition is proven without using the ``nilpotence principle", that is to 
  say there are no ``phantoms".
\end{abstract}

\section{Introduction} 

Let $V$ be a vector space of dimension $n+1$ over a field $k$. Consider the partial flag variety \[ E = \{ W_1 \subseteq W_n \subseteq V : \dim W_i = i \} \] 
Let $\varphi \in \End(V)$ be an endomorphism with distinct eigenvectors, so in particular the subalgebra $L := k[\varphi] \subseteq \End(V)$ is étale of dimension $n+1$ over $k$.
Now define the hyperplane section of $E$ \[ Y = \{ W_1 \subseteq W_n \subseteq V : \dim W_i = i, \quad \varphi(W_1) \subseteq W_n \} \]

The main result of this paper is 

\begin{thm} \label{main_thm}
  If $L$ is a finite product $\prod_i K$, for a fixed Galois extension $K/k$, and $n \geq 1$, then the motive of $Y$ decomposes as 
  \[ \M(Y) = \bigoplus_{i=0}^{n-2} \M(\proj^n_k)(i) \oplus \M(\Spec L)(n-1) \] 
\end{thm}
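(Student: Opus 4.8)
The plan is to realise the decomposition entirely through blow-up and projective-bundle formulas, so that every isomorphism comes from an honest morphism of varieties and there is nothing to lift. Write $p\colon Y\to\proj(V)$ for $(W_1\subseteq W_n)\mapsto W_1$, and let $D\subseteq\proj(V)$ be the closed subscheme of $\varphi$-stable lines; since the eigenvalues of $\varphi$ are distinct, $V$ is free of rank one over $L=k[\varphi]$, and $D$ is reduced and canonically $\cong\Spec L$. In an eigenbasis $Y$ is cut out in $\proj(V)\times\proj(V^\vee)$ by $\sum_ix_iy_i=0$ and $\sum_i\alpha_ix_iy_i=0$, and because the $\alpha_i$ are distinct these two equations have linearly independent differentials at every point of $Y$, so $Y$ is smooth; it is then irreducible of dimension $2n-2$ once $n\ge2$. (For $n=1$ one has $Y=\Spec L$ and the statement is immediate, so assume $n\ge2$.) Off $D$ the fibre of $p$ over $\langle v\rangle$ is $\proj(\operatorname{Ann}\langle v,\varphi v\rangle)$, so $p$ is a Zariski-locally-trivial $\proj^{n-2}$-bundle there, while $Y_D:=p^{-1}(D)_{\rd}$ is the Zariski-locally-trivial $\proj^{n-1}$-bundle $\proj\bigl((V/W_1)^\vee\bigr)$ over $\Spec L$, of codimension $n-1$ in $Y$.

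The key device is to resolve the degeneracy of $p$ by the two-step flag varieties
\[
  \hat Y=\{W_1\subseteq W_2\subseteq W_n\subseteq V:\dim W_i=i,\ \varphi(W_1)\subseteq W_2\},\qquad F=\{W_1\subseteq W_2:\dim W_i=i,\ \varphi(W_1)\subseteq W_2\},
\]
together with the forgetful morphisms $\hat Y\to F\to\proj(V)$ and $\hat Y\to Y$. I would prove three claims. (i) Forgetting $W_n$ realises $\hat Y\to F$ as the Zariski-locally-trivial $\proj^{n-2}$-bundle $\proj\bigl((V/W_2)^\vee\bigr)\to F$, with $W_2$ the tautological rank-two subbundle on $F$. (ii) $F\cong\operatorname{Bl}_D\proj(V)$: away from $D$ one has $W_2=\langle v,\varphi v\rangle$, and near an eigenline the ``direction of $\varphi v$ relative to $v$'' identifies the $\proj^{n-1}$ over that line with the projectivised normal direction, the comparison being induced by $\varphi-\alpha$ acting on $V/W_1$, which is invertible over $k$ because the eigenvalues are distinct. (iii) $\hat Y\cong\operatorname{Bl}_{Y_D}Y$: the morphism $\hat Y\to Y$ is proper and birational, $\hat Y$ is smooth (a projective bundle over the smooth $F$), it is an isomorphism over $Y\setminus Y_D$, and its exceptional locus — the planes $W_1\subseteq W_2\subseteq W_n$ with $W_1$ a $\varphi$-stable line — is a $\proj^{n-2}$-bundle over $Y_D$, which a local computation matches with the blow-up. (Put differently, $\hat Y$ is the strict transform of the relative hyperplane section $Y$ along $\operatorname{Bl}_D\proj(V)\to\proj(V)$: $Y$ fails to be a projective subbundle only because its defining section vanishes along $D$, and dividing by the exceptional divisor cures this.) Claims (ii) and (iii) are where the regular-semisimple hypothesis is used essentially.

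Granting these, the motivic bookkeeping is routine. The projective-bundle formula gives $\M(\hat Y)=\bigoplus_{i=0}^{n-2}\M(F)(i)$ and $\M(Y_D)=\bigoplus_{l=0}^{n-1}\M(\Spec L)(l)$; Manin's blow-up formula gives $\M(F)=\M(\proj^n_k)\oplus\bigoplus_{j=1}^{n-1}\M(\Spec L)(j)$ (center of codimension $n$) and $\M(\hat Y)=\M(Y)\oplus\bigoplus_{i=1}^{n-2}\M(Y_D)(i)$ (center of codimension $n-1$). Equating the two expressions for $\M(\hat Y)$, the $\M(\proj^n_k)$-part contributes $\bigoplus_{i=0}^{n-2}\M(\proj^n_k)(i)$ to $\M(Y)$, while the $\M(\Spec L)$-part contributes $\bigoplus_{i=0}^{n-2}\bigoplus_{j=1}^{n-1}\M(\Spec L)(i+j)$ with $\bigoplus_{i=1}^{n-2}\bigoplus_{l=0}^{n-1}\M(\Spec L)(i+l)$ removed; an elementary lattice-point count shows the multiplicity of $\M(\Spec L)(m)$ in the difference is $1$ for $m=n-1$ and $0$ otherwise, yielding precisely $\M(\Spec L)(n-1)$. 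The hypothesis $L=\prod_iK$ for a single Galois $K/k$ is what keeps this clean, making $Y_D$ a disjoint union of honest projective spaces over $K$ and letting $\M(\Spec L)$ be handled as a sum of copies of one indecomposable.

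The real obstacle is not this arithmetic of indices but making the ``removal'' honest: one must produce the complementary direct summand of $\M(\hat Y)$ isomorphic to $\M(Y)$ from explicit correspondences, rather than by cancellation via a realisation functor, Krull--Schmidt, or the nilpotence principle. Both decompositions of $\M(\hat Y)$ come with explicit idempotents, and the point is their compatibility: the $\M(Y_D)(i)$-summands coming from $\hat Y\cong\operatorname{Bl}_{Y_D}Y$ are supported on the exceptional divisor of $\hat Y$, which maps into the exceptional divisor of $F=\operatorname{Bl}_D\proj(V)$, so the comparison localises to a neighbourhood of these loci — and there every variety in sight is a projective bundle over $\Spec L$, where the matching of idempotents is a direct calculation. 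Carrying out this localisation, together with verifying the blow-up identifications (ii) and (iii), is where the bulk of the work lies. The torus action on $Y$ over the splitting field of $\varphi$ gives an alternative route, via a Bialynicki--Birula decomposition and Galois descent, but the blow-up argument above is the one I would pursue because it is manifestly free of phantoms.
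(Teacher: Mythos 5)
Your blow-up strategy is genuinely different from the paper's. The paper works directly with the $\oh(1)$-divisor structure: it builds an explicit morphism $\psi \colon \bigoplus_{i=0}^{n-2}\M(\proj^n)(i)\oplus\M(\Spec L)(n-1)\to\M(Y)$ out of correspondences ($f_i = c_x^{(i)}\circ c_f$ coming from $\pi\circ i$, and $f' = c_j^t\circ c_g$ coming from pushing forward from $E|_Z$), and then verifies it is an isomorphism by the Yoneda-type identity principle: it suffices that $\varphi_S = (i^*, j_*)$ behaves well on Chow groups of $Y\times_k S$ for every smooth projective $S$, which reduces (Proposition 2.2 / Corollary 2.5) to injectivity of $(j\times\id_S)_*\circ(\pi|_Z\times\id_S)^*$, which in turn is proved via equivariant localisation (Proposition 3.3, 3.6). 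Your approach instead resolves the degenerate $\proj^{n-2}$-fibration $Y\to\proj(V)$ through the auxiliary flag varieties $\hat Y$ and $F$, identifies $F\cong\operatorname{Bl}_D\proj(V)$ and $\hat Y\cong\operatorname{Bl}_{Y_D}Y$, and compares the resulting two decompositions of $\M(\hat Y)$. The geometric identifications (i)--(iii) are plausible (the $\varphi-\alpha$ description of the normal direction checks out, and the multiplicity count giving exactly $\M(\Spec L)(n-1)$ is correct), so the bookkeeping is sound.

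The genuine gap is the one you name yourself and then do not fill: extracting the $\M(Y)$-summand from the two decompositions of $\M(\hat Y)$ ``without cancellation'' is not a bookkeeping step but the entire content of the theorem. In the category of Chow motives there is no Krull--Schmidt, and ``both decompositions come with explicit idempotents'' does not by itself make the two families of idempotents orthogonal or compatible in any way that permits subtraction. The suggested remedy --- ``the comparison localises to a neighbourhood of these loci, where every variety is a projective bundle over $\Spec L$, and there the matching is a direct calculation'' --- is not a meaningful operation on Chow motives: one cannot restrict a correspondence in $A^\bullet(\hat Y\times\hat Y)$ to a Zariski neighbourhood of the exceptional divisor and retain an idempotent in the motivic category, and the exceptional divisors are not open, so no localisation-sequence argument for motives is available either. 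What you would actually need to do is exhibit, on the nose, an idempotent in $\End(\M(\hat Y))$ that (a) is a sub-idempotent of the one cutting out $\bigoplus_{i=0}^{n-2}\bigoplus_{j=1}^{n-1}\M(\Spec L)(i+j)$ from the $F$-side, (b) contains the one cutting out $\bigoplus_{i=1}^{n-2}\M(Y_D)(i)$ from the $Y$-side, and (c) has complement giving $\M(\Spec L)(n-1)$; proving such compatibilities between correspondences coming from two unrelated blow-ups is exactly as hard as the original problem, and is precisely what the paper's general criterion (which works universally in $S$, hence controls all correspondences at once) and its equivariant orthogonality computation (Proposition 3.3) were designed to replace. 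As written, your argument would prove at most a Grothendieck-group identity or a decomposition after inverting the nilpotence ideal --- i.e., a decomposition with possible phantoms --- which is the statement the paper is explicitly trying to improve on.
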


This gives a proof of a special case of the main theorem of \cite{twistedmilnor} without ``phantoms" -- motives in the decomposition which become trivial after 
a field extension. 
In fact, this was the main inspiration for the present article and many of the results of \cite{twistedmilnor} appear here, if only implicitly. 

Our approach first considers the problem in the general setting of an $\oh(1)$-type divisor of a projective bundle in Section 2, where we obtain a criterion (\Cref{crit}) for 
a decomposition as in \Cref{main_thm} to hold. The criterion is then verified for $Y$ in Section 3 using equivariant methods for torus actions. As a consequence, we obtain a
natural proof showing that if $\varphi$ satisfies the hypotheses of \Cref{main_thm}, $L$ (or equivalently $K$) is an invariant of $Y$ (\Cref{invariant}).

\vspace{1em}
\noindent \textbf{Notation and conventions:} A smooth variety over a field $k$ is an equidimensional algebraic scheme which is smooth over $k$. 
For a vector space $V$ over $k$ 
and a $k$-scheme $s: X \to \Spec k$, $\underline{V}$ is the trivial vector bundle $s^*(\tilde{V})$ on $X$. $\proj(V)$ is the projective space of one-dimensional quotients 
of $V$, and similarly $\proj(\cg{E})$ is the projective bundle of rank one quotients of the vector bundle $\cg{E}$. A Cartier divisor linearly equivalent to 
the zero locus of a section of a line bundle $\cg{L}$ is said to be of $\cg{L}$-type. 
The $i$-th power of the Tate motive is written $\Z(i)$, and ``twists" in the opposite direction of the Tate twist of $\ell$-adic cohomology.

\section[Chow groups of O(1)-type diviors on a projective bundle]{Chow groups of $\oh(1)$-type divisors on a projective bundle}

Let $k$ be an arbitrary field. We consider the following situation: $X$ is a smooth projective variety over $k$ with a vector bundle $\cg{E}$ of rank $r+1$ which is
generated by global sections. Set $E = \proj(\cg{E}) = \Proj (\operatorname{Sym} \cg{E})$ with projection map $\pi$. We will be concerned with sections $s \in H^0(X, \cg{E}) = H^0(E, \oh(1))$ such that the zero locus $Z$ of  
$s$ is smooth of codimension $r+1$ in $X$ and $Y$, the divisor corresponding to $s$, is smooth. Letting $U = X - Z$,
$Y|_U := Y \times_X U$ is a projective bundle of rank $r-1$ over $U$ (corresponding to the cokernel $\cg{F}$ of $\oh_X \nra{s} \cg{E}$, restricted to $U$) 
and $Y|_Z = E|_Z$ is a projective bundle of rank $r$ over $Z$ (corresponding to $\cg{E} \otimes \oh_Z$). The following commutative diagram summarises our notation 
for the inclusion maps: 
\[\begin{tikzcd}[cramped]
	{E|_Z} && E && {E|_U} \\
	\\
	&& Y && {Y|_U}
  \arrow["{j'}"', hook, from=1-1, to=1-3]
	\arrow["{j}", hook, from=1-1, to=3-3]
	\arrow[hook', from=1-5, to=1-3]
	\arrow["i", hook', from=3-3, to=1-3]
	\arrow["{i'}"', hook', from=3-5, to=1-3]
	\arrow[hook', from=3-5, to=1-5]
	\arrow[hook', from=3-5, to=3-3]
\end{tikzcd}\]

By the projective bundle theorem \cite[Theorem 9.6]{3264}, we have 
that $A^\bullet(E)$ is a free $A^\bullet(X)$-module generated by $H_E^i$, $H_E = c_1(\oh(1))$ and $i=0, \dots, r$.  
Analoguous statements hold for $A^\bullet(Y|_U)$, $A^\bullet(U)$ and $H_Y = i'^* H_E$  
(rank $r$), and $A^\bullet(E|_Z)$, $A^\bullet(Z)$ and $H = j'^* H_E$ (rank $r+1$). The former requires some explanation. By construction, $Y|_U$, as a $U$-scheme, is 
$\proj(\cg{F}|_U)$. The inclusion into $E|_U = \proj(\cg{E}|_U)$ comes from the surjective homomorphism $\Sym \cg{E}|_U \epi \Sym \cg{F}|_U$ induced 
by the quotient map $\cg{E} \epi \cg{F}$.
Hence the line bundle $\oh_{\proj(\cg{F}|_U)}(1)$ on $Y|_U$ is the pullback of $\oh_{\proj(\cg{E}|_U)}(1)$ by this inclusion 
(this follows from the local case as in \cite[Proposition II.5.13.c]{hartshorne}). 
Since $\oh_{\proj(\cg{E}|_U)}(1) = \oh_{\proj(\cg{E})}(1)|_U$,  
the identity $H_Y = i'^* H_E$ is the translation of the above for the first Chern classes of the line bundles.

Given a smooth projective variety $S$, we can take the data $X, \cg{E}, s$ and associate to it the data $X \times_k S, p_1^* \cg{E}, p_1^* (s)$. 
Then, applying the above constructions to  $X \times_k S, p_1^* \cg{E}, p_1^* (s)$, we see that the varities $E, Z, Y$
are obtained from those constructed from $X, \cg{E}, s$ by taking a product with $S$. The same holds for 
morphisms and classes in the Chow rings. All of these operations will simply be called ``base change by $S$".

\begin{rmk}
  It is harmless to take $Z_{\rd}$ in this setup instead of $Z$. Indeed, $U$ remains the same, and all other data are unaffected. The description of the ``base change"
  of $Z$ will still work, since a smooth variety $S$ is geometrically reduced, hence $Z_{\rd} \times_k S = (Z \times_k S)_{\rd}$. We will denote both by $Z$ is the sequel.
\end{rmk}

Define a group homomorphism $\varphi: A^\bullet(E) \oplus A^\bullet(E|_Z) \to A^\bullet(Y)$ by
 $\varphi = (i^*, j_*)$ 

\begin{prop} \label{hyp_chow}
  With the same notations as above:
  
  \textbf{a.} $\varphi$ is surjective

  \textbf{b.} For every class $\gamma \in A^\bullet(Y)$, there exist $\alpha_0, \dots, \alpha_{r-1} \in A^\bullet(X)$ and $\beta \in A^\bullet(Z)$ such that 
  \[ \varphi(\sum_{i=0}^{r-1} \pi^* \alpha_i \cdot H_E^i, \pi|_Z^* \beta) = \gamma \]

  \textbf{c.} If $j_* \circ (\pi|_Z)^*$ is injective, then such an element is unique.
\end{prop}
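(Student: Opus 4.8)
\textit{Proof proposal.} Part (a) will drop out of (b) (the element produced in (b) is a particular element of $A^\bullet(E)\oplus A^\bullet(E|_Z)$), so the substance is the existence in (b) and the uniqueness in (c). The two tools are the projective bundle theorem recalled above (applied to $\pi\colon E\to X$, to $\pi|_Z\colon E|_Z\to Z$, and to $Y|_U=\proj(\cg F|_U)\to U$) and the localization exact sequence for Chow groups. Throughout write $H_Y:=i^\ast H_E\in A^1(Y)$ (so that $H_Y=c_1(\oh(1)|_Y)$, its restriction to $Y|_U$ being the projective bundle class used above, and $j^\ast H_Y=H$), and let $\iota\colon Z\hookrightarrow X$ be the inclusion, with $j'=i\circ j$ so that $j'_\ast=i_\ast j_\ast$.

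For \textbf{(b)}, start with $\gamma\in A^\bullet(Y)$ and restrict it to the open subvariety $Y|_U=Y\setminus E|_Z$. Since $Y|_U$ is a projective bundle over $U$ of relative dimension $r-1$, write $\gamma|_{Y|_U}=\sum_{i=0}^{r-1}\pi_U^\ast\bar\alpha_i\cdot H_Y^i$ with $\bar\alpha_i\in A^\bullet(U)$, and lift each $\bar\alpha_i$ to $\alpha_i\in A^\bullet(X)$ using surjectivity of restriction $A^\bullet(X)\to A^\bullet(U)$ (localization for $Z\subseteq X$). Then $\gamma-i^\ast\!\big(\sum_{i=0}^{r-1}\pi^\ast\alpha_i H_E^i\big)$ restricts to $0$ on $Y|_U$, so by the localization sequence $A^\bullet(E|_Z)\xrightarrow{j_\ast}A^\bullet(Y)\to A^\bullet(Y|_U)\to 0$ it equals $j_\ast\delta$ for some $\delta\in A^\bullet(E|_Z)$. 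Expand $\delta=\sum_{m=0}^{r}\pi|_Z^\ast\beta_m\cdot H^m$ by the projective bundle theorem for $E|_Z$; the projection formula together with $H^m=j^\ast H_Y^m$ gives $j_\ast\delta=\sum_{m=0}^{r}H_Y^m\cdot j_\ast(\pi|_Z^\ast\beta_m)$. The $m=0$ summand is already of the desired shape, and the heart of the matter is the identity
\[ H_Y\cdot j_\ast(\pi|_Z^\ast\beta)=i^\ast\pi^\ast(\iota_\ast\beta)\qquad (\beta\in A^\bullet(Z)). \]
This follows from flat base change for $\pi$ applied to the fibre square obtained by pulling $E$ back along $\iota$ — which gives $\pi^\ast\iota_\ast=j'_\ast(\pi|_Z)^\ast$ — combined with $j'_\ast=i_\ast j_\ast$ and the self-intersection formula $i^\ast i_\ast=(\,\cdot\,H_Y)$ (using $N_{Y/E}=\oh(1)|_Y$). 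Plugging this in, $H_Y^m\cdot j_\ast(\pi|_Z^\ast\beta_m)=H_Y^{m-1}\cdot i^\ast\pi^\ast(\iota_\ast\beta_m)$, and since $\delta$ involves $H$ only up to the $r$-th power, $m-1$ ranges exactly over $0,\dots,r-1$ — this is precisely what pins down the index set in the statement. Collecting terms, $\gamma=\varphi\big(\sum_{i=0}^{r-1}\pi^\ast\tilde\alpha_i H_E^i,\ \pi|_Z^\ast\beta_0\big)$ with $\tilde\alpha_i=\alpha_i+\iota_\ast\beta_{i+1}$.

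For \textbf{(c)}, suppose $\varphi\big(\sum_{i=0}^{r-1}\pi^\ast a_i H_E^i,\ \pi|_Z^\ast b\big)=0$, i.e.\ $\sum_{i=0}^{r-1}i^\ast(\pi^\ast a_i H_E^i)+j_\ast(\pi|_Z^\ast b)=0$ in $A^\bullet(Y)$. Push this relation forward along $i$: using $i_\ast i^\ast(x)=H_E\cdot x$ and $i_\ast j_\ast=j'_\ast$ together with the same flat base change $j'_\ast(\pi|_Z)^\ast=\pi^\ast\iota_\ast$, one gets $\sum_{i=0}^{r-1}\pi^\ast a_i\, H_E^{i+1}+\pi^\ast(\iota_\ast b)=0$ in $A^\bullet(E)$. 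Since $1,H_E,\dots,H_E^{r}$ is a free $A^\bullet(X)$-basis of $A^\bullet(E)$ and $\pi^\ast$ is injective, every coefficient vanishes: $a_0=\dots=a_{r-1}=0$ and $\iota_\ast b=0$. Substituting back, the original relation collapses to $j_\ast(\pi|_Z^\ast b)=(j_\ast\circ(\pi|_Z)^\ast)(b)=0$, and the hypothesis of (c) forces $b=0$; hence the element of $A^\bullet(E)\oplus A^\bullet(E|_Z)$ was zero, which is the asserted uniqueness. (In fact $j_\ast\circ(\pi|_Z)^\ast$ is always injective — compute $j^\ast j_\ast=c_r(N_{E|_Z/Y})\cdot(-)$ via the self-intersection formula and compare top powers of $H$ in the free module — so the hypothesis is automatic, but we only use it as stated.)

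The step I expect to require the most care is the identity $H_Y\cdot j_\ast(\pi|_Z^\ast\beta)=i^\ast\pi^\ast(\iota_\ast\beta)$ and the bookkeeping around it: one must correctly interleave flat base change for $\pi$, the self-intersection formula for the divisor $Y\subseteq E$, and the bound $\deg_H\delta\le r$, since it is exactly this bound that lands every reduced summand within the range $H_Y^0,\dots,H_Y^{r-1}$ and makes the index set in the statement come out as claimed. Everything else is a routine combination of the projective bundle theorem and localization.
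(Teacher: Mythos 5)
Your proof is correct and runs on the same ingredients the paper uses: the projective bundle theorem, the localization sequence $A^\bullet(E|_Z)\to A^\bullet(Y)\to A^\bullet(Y|_U)\to 0$, the divisorial self-intersection formula for $Y\subseteq E$ (giving $i^\ast i_\ast=(\cdot H_Y)$ and $i_\ast i^\ast=(H_E\cdot)$), and flat base change over the square $E|_Z\to E$, $Z\to X$. In part (b) your bookkeeping is tidier than the paper's: by first choosing $\alpha_0,\dots,\alpha_{r-1}$ to match $\gamma|_{Y|_U}$ and then absorbing the correction $j_\ast\delta$ in a single pass via $H_Y\cdot j_\ast(\pi|_Z^\ast\beta)=i^\ast\pi^\ast(\iota_\ast\beta)$, you land directly in $H_E$-degrees $0,\dots,r-1$, whereas the paper starts from a representative with an $H_E^r$ term and spends a separate paragraph eliminating it. Part (c) is essentially the same argument: push forward along $i$, use the free $A^\bullet(X)$-basis $1,\dots,H_E^r$ of $A^\bullet(E)$ to kill all $\alpha_i$, then invoke the hypothesis.

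Your parenthetical at the end deserves more than a parenthesis. The claim that $j_\ast\circ(\pi|_Z)^\ast$ is \emph{always} injective — because $j^\ast j_\ast=c_r(N_{E|_Z/Y})\cdot(-)$ and, from the exact sequence $0\to N_{E|_Z/Y}\to\pi|_Z^\ast N_{Z/X}\to\oh(1)|_{E|_Z}\to 0$ (the middle term pulled back from $Z$, hence $H$-free), the $H^r$-coefficient of $c_r(N_{E|_Z/Y})$ in the free $A^\bullet(Z)$-module is the unit $(-1)^r$ — looks sound to me, and it carries over verbatim after base change by any $S$. If it holds, the hypothesis in Corollary 2.3 is automatic, so Proposition 3.7 and the Galois-descent step in the proof of Theorem 1.1 would be superfluous, and the decomposition would hold for arbitrary étale $L$, not just $L\cong\prod_i K$. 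That is a strong conclusion at odds with how the paper is organized, so stress-test it carefully (in particular, that nothing breaks when $Z$ is taken reduced rather than scheme-theoretic, and that $j$ really is a regular embedding of codimension $r$) before relying on it — but as written I do not see a gap, and if it survives scrutiny it is a genuine simplification and strengthening you should pursue.
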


\begin{proof}
  \textbf{a.} By the right exact sequence $A^\bullet(E|_Z) \nra{j_*} A^\bullet(Y) \to A^\bullet(Y|_U) \to 0$, we are reduced to showing that $i'^*$ is surjective. 
  $A^\bullet(Y|_U)$ is generated (as a ring) by $(\pi|_U)^* A^\bullet(U)$ and $H_Y$. Clearly $H_Y$ is in the image of $i'^*$ and the commutativity of 
\[\begin{tikzcd}[cramped]
	{A^\bullet(E)} && {A^\bullet(Y|_U)} \\
	\\
	{A^\bullet(X)} && {A^\bullet(U)}
	\arrow["{i'^*}", from=1-1, to=1-3]
	\arrow["{\pi^*}", from=3-1, to=1-1]
	\arrow[from=3-1, to=3-3]
	\arrow["{\pi|_U^*}"', from=3-3, to=1-3]
\end{tikzcd}\] and the surjectivity of the restriction $A^\bullet(X) \to A^\bullet(U)$ shows that  $(\pi|_U)^* A^\bullet(U)$ is also in the image.

\textbf{b.}   That we can eliminate positive powers of $H$ follows from the equality $j_*(H \cdot \alpha) = i^*(j'_*(\alpha))$. 
  This identity holds since $i^* \circ j'_* = (i^* \circ i_*) \circ j_*$ and 
  since $Y$ is a divisor we have $(i^* \circ i_*)(\beta) = i^*(H_E) \cdot \beta$ (\cite[Proposition 2.6.c]{fulton}). But by the projection formula, 
  $i^* H_E \cdot j_*(\alpha) = j_* (j^*(i^* (H_E)) \cdot \alpha ) = j_*(H \cdot \alpha)$. 

  Now let $\alpha \in A^\bullet(X)$, then $i'^*(\pi^*(\alpha) \cdot H_E^r) = \sum_{i=0}^{r-1} (\pi|_U)^* \gamma_i \cdot H_Y^i $ by the projective bundle theorem, 
  hence by the proof of \textbf{a.} there 
  are elements $\alpha_0, \dots, \alpha_{r-1}$ such that $i'^*(\sum_{i=0}^{r-1} \pi^* \alpha_i \cdot H_E^i ) = \sum_{i=0}^{r-1} (\pi|_U)^* \gamma_i \cdot H_Y^i$. 
  Hence $i'^*(\pi^* \alpha \cdot H_E^r - \sum_{i=0}^{r-1} \pi^* \alpha_i \cdot H_E^i) = 0$, so 
  $i^*(\pi^* \alpha \cdot H_E^r) - i^*(\sum_{i=0}^{r-1} \pi^* \alpha_i \cdot H_E^i) \in \im j_*$. So there are $\beta_0, \dots, \beta_r$ such that 
  $\varphi(\sum_{i=0}^{r-1} \pi^* \alpha_i \cdot H_E^i, \sum_{j=0}^{r} (\pi|_Z)^*\beta_j \cdot H^i) = i^*(\pi^* \alpha \cdot H_E^r)$. Eliminating the positive powers of $H$ as above 
  will then give an element of the desired form.

  \textbf{c.} Elements of the form given in \textbf{b.} form a subgroup in $A^\bullet(E) \oplus A^\bullet(E|_Z)$, so we just need to prove $\ker \varphi$ meets this 
  subgroup trivially. If $\varphi(x) = 0$, then $i_*(\varphi(x)) = 0$. We have $i_*(i^*(\sum_{i=0}^{r-1} \pi^* \alpha_i \cdot H_E^i)) = \sum_{i=0}^{r-1} \pi^* \alpha_i \cdot H_E^{i+1}$
  . Let $\hat{j}: Z \mono X$ denote the inclusion of $Z$ in $X$. We have $i_*(j_*((\pi|_Z)^* \beta )) = j'_*((\pi|_Z^* \beta)) = \pi^*(\hat{j}_* \beta)$ since $\pi$ is flat and 
\[\begin{tikzcd}[cramped]
	{E|_Z} && E \\
	\\
	Z && X
	\arrow["{j'}", hook, from=1-1, to=1-3]
	\arrow["{\pi|_Z}", from=1-1, to=3-1]
	\arrow["\pi"', from=1-3, to=3-3]
	\arrow["{\hat{j}}", hook, from=3-1, to=3-3]
\end{tikzcd}\] is a fibre square (by definition!). Putting these two facts together, we find that 
\[ \left(\sum_{i=0}^{r-1} \pi^* \alpha_i \cdot H_E^i, (\pi|_Z)^* \beta \right) \in \ker \varphi \implies  \pi^*(\hat{j}_* \beta) + \sum_{i=0}^{r-1} \pi^* \alpha_i \cdot H_E^{i+1} = 0 \]
 Since $1, H_E, \dots, H_E^r$ are a $A^\bullet(X)$ basis, this implies $\alpha_i = 0$ for $i=0, \dots, r-1$. So we must have $j_*(\pi|_Z^* \beta) = 0$, hence by hypothesis 
 $\beta = 0$, so the intersection with the kernel is trivial as desired.
\end{proof}

We will call this subgroup $C$, and note that \textbf{c.} just says that $C$ is mapped isomorphically onto $A^\bullet(Y)$ by $\varphi$. Turning to the motive of $Y$, we 
use the category defined in \cite{manin}. In the case of the Chow ring, known in the literature as the \emph{category of effective Chow motives}. We also follow the notation 
of \cite[\S 3]{manin} concerning the ``Identity principle". 

Now consider the full subcategory of the category of effective Chow motives whose objects are finite direct sums of motives 
$\M(X)(i) = \M(X) \otimes \Z(i)$, with $X$ a smooth projective variety .
By means of direct sums, all hom-sets in this categoy can be obtained from correspondences in the graded groups 
$A^\bullet(X \times_k Y)$ for $X, Y$ smooth projective varieties. So we get a variant of Yoneda's lemma for this subcategory, where we only need consider these correspondences. 
More precisely, we will use the following consequence: if $M, N$ are such motives, and $\psi \in \Hom(M, N)$, then if for all smooth projective varieties $S$ over $k$,
$\psi_S : \Hom^\bullet(\M(S), M) \to \Hom^\bullet(\M(S), N)$ is an isomorphism, then $M \cong N$.

Let $x = i^*(H_E) \in A^1(Y)$, $f = \pi \circ i$ and $g = \pi|_Z$. Then we have correspondences: 
\begin{align*} &c_{x} \in \Hom^1(\M(Y), \M(Y)), \qquad c_f \in \Hom(\M(X), \M(Y)), \\ 
&c_g \in \Hom(\M(Z), \M(E|_Z)), \qquad c_j^t \in \Hom^r(\M(E|_Z), \M(Y)) 
  \end{align*} and we define for $0 \leq i \leq r-1$ correspondences \[ f_i = c_{x}^{(i)} \circ c_f \in \Hom(\M(X)(i), \M(Y)), \qquad f' = c_j^t \circ c_g \in \Hom(\M(Z)(r), \M(Y)) \]
  and a morphism \[ \psi: M := \bigoplus_{i=0}^{r-1} \M(X)(i) \oplus \M(Z)(r) \to \M(Y), \qquad \psi = (f_0, \dots, f_{r-1}, f')  \]

  For any smooth projective variety $S$, we can factor $\psi_S$ through the $\varphi$ of \Cref{hyp_chow} applied to varieties, bundle, section, etc ``base changed" by $S$, such that 
  $\Hom^\bullet(\M(S), M)$ maps isomorphically onto the distinguished subgroup $C \subseteq A^\bullet(E \times_k S) \oplus A^\bullet(E|_Z \times_k S)$. Using \Cref{hyp_chow} \textbf{c.}
  we obtain 

  \begin{cor} \label{crit}
    If  $(j \times \id_S)_* \circ (\pi|_Z \times \id_S)^*$ is injective for all smooth projective varieties $S$, then $\psi$ is an isomorphism.
  \end{cor}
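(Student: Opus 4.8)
The plan is to apply the variant of Yoneda's lemma recorded just above: since $M$ and $\M(Y)$ are both finite direct sums of motives of the form $\M(-)(\bullet)$, it suffices to prove that for every smooth projective variety $S$ over $k$ the induced map $\psi_S\colon\Hom^\bullet(\M(S),M)\to\Hom^\bullet(\M(S),\M(Y))$ is bijective. So one fixes an arbitrary such $S$ and works entirely inside Chow groups of products with $S$.

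First I would make the factorization of $\psi_S$ alluded to just before the statement completely explicit. Unwinding the definitions of the correspondences and using that composition of correspondences is compatible with pullback, proper pushforward and intersection with a fixed cycle class, one checks that after applying $\Hom^\bullet(\M(S),-)$ the morphism $f_i=c_x^{(i)}\circ c_f$ induces the operation $a\mapsto(i\times\id_S)^*\big((\pi\times\id_S)^*a\cdot H_E^{\,i}\big)$ on $A^\bullet(X\times_k S)$, while $f'=c_j^t\circ c_g$ induces $b\mapsto(j\times\id_S)_*\big((\pi|_Z\times\id_S)^*b\big)$ on $A^\bullet(Z\times_k S)$, the Tate twist by $r$ on $\M(Z)(r)$ accounting for the codimension-$r$ pushforward along $j$. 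By the Remark on ``base change by $S$'', the triple $(X\times_k S,\,p_1^*\cg{E},\,p_1^*s)$ produces exactly $E\times_k S$, $Z\times_k S$, $Y\times_k S$ together with the maps $\pi\times\id_S$, $i\times\id_S$, $j\times\id_S$, $\pi|_Z\times\id_S$, so these two operations are precisely the two components of the homomorphism $\varphi$ of \Cref{hyp_chow} for the base-changed data, evaluated on inputs of the form $\sum_i(\pi\times\id_S)^*a_i\cdot H_E^i$ and $(\pi|_Z\times\id_S)^*b$. Hence $\psi_S$ factors as
\[
\Hom^\bullet(\M(S),M)\ \xrightarrow{\ \sim\ }\ C_S\ \hookrightarrow\ A^\bullet(E\times_k S)\oplus A^\bullet(E|_Z\times_k S)\ \xrightarrow{\ \varphi_S\ }\ A^\bullet(Y\times_k S),
\]
where $\varphi_S$ and $C_S$ are the $\varphi$ and the distinguished subgroup $C$ of \Cref{hyp_chow} for the base-changed data, and $A^\bullet(Y\times_k S)=\Hom^\bullet(\M(S),\M(Y))$; the first arrow sends $(a_0,\dots,a_{r-1},b)$ to $\big(\sum_i(\pi\times\id_S)^*a_i\cdot H_E^i,\,(\pi|_Z\times\id_S)^*b\big)$, and it is an isomorphism onto $C_S$ because $1,H_E,\dots,H_E^r$ form an $A^\bullet$-basis (so the first coordinate recovers the $a_i$) and $(\pi|_Z\times\id_S)^*$ is split injective (so the second recovers $b$).

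With the factorization in place the conclusion is immediate. By \Cref{hyp_chow}\,\textbf{a} and \textbf{b} applied to the base-changed data, every class of $A^\bullet(Y\times_k S)$ is $\varphi_S$ of an element of the distinguished form, i.e.\ of an element of $C_S$, so $\varphi_S|_{C_S}$ is surjective; and the hypothesis of the corollary, namely injectivity of $(j\times\id_S)_*\circ(\pi|_Z\times\id_S)^*$, is exactly the hypothesis ``$j_*\circ(\pi|_Z)^*$ injective'' of \Cref{hyp_chow}\,\textbf{c} for the base-changed data, so $\varphi_S|_{C_S}$ is also injective. Therefore $\varphi_S|_{C_S}$, and hence $\psi_S$, is an isomorphism; as $S$ was arbitrary, the Yoneda variant gives that $\psi$ is an isomorphism. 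The only genuine work is the bookkeeping in the previous paragraph — verifying that the composites $c_x^{(i)}\circ c_f$ and $c_j^t\circ c_g$ really do act by the stated pull/push/cap operations on $\Hom^\bullet(\M(S),-)$ and that this matches the ``base change by $S$'' formalism on the nose, including the degree shifts and the identification of $C_S$ with $\Hom^\bullet(\M(S),M)$; I expect identifying $c_j^t\circ c_g$ with $(j\times\id_S)_*(\pi|_Z\times\id_S)^*$ and tracking the Tate twist by $r$ to be the fiddly point, with everything downstream being a direct appeal to \Cref{hyp_chow}.
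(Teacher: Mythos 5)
Your proof is correct and follows exactly the route the paper intends: factor $\psi_S$ through $\varphi_S$ restricted to the distinguished subgroup $C_S$, use \Cref{hyp_chow}\,\textbf{b} for surjectivity onto $A^\bullet(Y\times_k S)$ and \textbf{c} (with the corollary's hypothesis) for injectivity, then invoke the Yoneda variant. The paper leaves the bookkeeping of identifying $f_i$ and $f'$ with $(i\times\id_S)^*\big((\pi\times\id_S)^*(-)\cdot H_E^i\big)$ and $(j\times\id_S)_*(\pi|_Z\times\id_S)^*$ implicit in the sentence preceding the corollary; you make it explicit, but the argument is the same.
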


\section{Application to the hyperplane section}

Let $k$ be a field and $V$ a vector space over $k$ of dimension $n+1$, $n \geq 1$. We being by describing more precisely the varieties from the introduction: 

The natural pairing $V \times V^* \to k$ defines a hypersurface $E \subseteq \proj(V^*) \times_k \proj(V)$, 
called a Milnor hypersurface. Concretely, fixing a basis $y_0, \dots, y_n$ of $V$ and corresponding dual basis $x_0, \dots, x_n$ of $V^*$, $E$ is defined by the equation 
$\sum_{i=0}^n x_i y_i = 0$, or what is the same, $E$ is the divisor given by the section $\sum_{i=0}^n x_i \otimes y_i \in 
H^0(\proj(V) \times_k \proj(V^*), p_1^*\oh_{\proj(V^*)}(1) \otimes p_2^* \oh_{\proj(V)}(1))$. 
Restricting the first projection map $p_1$ to $E \mono \proj(V^*) \times_k \proj(V)$, we obtain a 
projective bundle $\pi: E \to \proj(V^*)$ (the restriction of $p_2$ will be denoted $\pi'$). 
Indeed, one sees $E \cong \proj(\underline{V} / \oh_{\proj(V^*)}(-1))$, with $\oh(1) = \pi'^* \oh_{\proj(V)}(1)$, via the inclusion 
$E \mono \proj(V^*) \times_k \proj(V)$. Consequently, with $\cg{E} = (\underline{V} / \oh_{\proj(V^*)}(-1)) \otimes \oh_{\proj(V^*)}(1)$, 
we also have $E \cong \proj(\cg{E}$) over $\proj(V^*)$ and 
$\oh_{\proj (\cg{E})}(1) = \pi^* \oh(1)_{\proj(V^*)} \otimes \pi'^* \oh(1)_{\proj(V)}$ (see \cite[Lemma II.7.9]{hartshorne}). 

$Y$ is then defined by additionally imposing the equation coming from the twisted pairing $V \times V^* \to k$, $(v, f) \mapsto f(\varphi(v))$, so in particular, intersecting 
with a divisor of type $\oh(1, 1) = p_1^*\oh_{\proj(V^*)}(1) \otimes p_2^* \oh_{\proj(V)}(1)$.

\begin{lemma}
  $Y$ is a smooth effective divisor in $E$ corresponding to a global section $s \in H^0(E, \oh_{\proj (\cg{E})}(1))$.
\end{lemma}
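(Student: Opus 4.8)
The plan is to produce the section $s$ essentially by construction, to reduce the assertion ``$Y$ is an effective divisor'' to the inequality $Y \neq E$, and to obtain both the smoothness of $Y$ and this inequality from a single Jacobian computation performed over $\bar k$ after diagonalising $\varphi$.

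For the section: the twisted pairing $V \times V^* \to k$, $(v,f) \mapsto f(\varphi v)$, is bilinear, so exactly as for the pairing defining $E$ it determines a global section $s' \in H^0\!\big(\proj(V^*) \times_k \proj(V),\, p_1^*\oh_{\proj(V^*)}(1) \otimes p_2^*\oh_{\proj(V)}(1)\big)$ (in the coordinates above, $s' = \sum_{i,j} a_{ij}\, x_i \otimes y_j$ where $(a_{ij})$ is the matrix of $\varphi$). Restricting $s'$ to $E$ and using the identity $\big(p_1^*\oh_{\proj(V^*)}(1) \otimes p_2^*\oh_{\proj(V)}(1)\big)\big|_E = \oh_{\proj(\cg{E})}(1)$ recorded above gives $s := s'|_E \in H^0(E, \oh_{\proj(\cg{E})}(1))$, and by definition $Y$ is the zero scheme $Z(s)$. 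Hence $Y$ is visibly effective and cut out by a section of $\oh_{\proj(\cg{E})}(1)$; since $E$ is integral, $Z(s)$ is an effective Cartier divisor as soon as $s \neq 0$. It remains to prove that $Y$ is smooth and that $s \neq 0$.

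Smoothness may be checked after base change to $\bar k$, over which $\varphi$ is diagonalisable with pairwise distinct eigenvalues $\lambda_0,\dots,\lambda_n$ (this is what it means for $L$ to be étale of dimension $n+1$). Fixing an eigenbasis, $Y_{\bar k} \subseteq \proj(V^*)\times\proj(V)$ is the common zero locus of the bidegree-$(1,1)$ forms $g_1 = \sum_i x_i y_i$ and $g_2 = \sum_i \lambda_i x_i y_i$. I will apply the Jacobian criterion for complete intersections in a product of projective spaces: passing to an affine chart $x_p = y_q = 1$ and using the two Euler relations to discard the $\partial_{x_p}$ and $\partial_{y_q}$ columns, one finds that at a point $([x],[y]) \in Y_{\bar k}$ the scheme $Y_{\bar k}$ is smooth of the expected dimension $2n-2$ exactly when
\[ J = \begin{pmatrix} y_0 & \cdots & y_n & x_0 & \cdots & x_n \\
\lambda_0 y_0 & \cdots & \lambda_n y_n & \lambda_0 x_0 & \cdots & \lambda_n x_n \end{pmatrix} \]
has rank $2$ at $(x,y)$. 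If it did not, then the first row being nonzero (the $x_i, y_i$ do not all vanish), the second row would be a scalar multiple $c$ of it, forcing $(\lambda_i - c)x_i = (\lambda_i - c)y_i = 0$ for every $i$; as the $\lambda_i$ are distinct there is at most one index $i_0$ with $\lambda_{i_0} = c$, so $x_i = y_i = 0$ for all $i \neq i_0$, whence $g_1(x,y) = x_{i_0}y_{i_0} \neq 0$ (since $x, y \neq 0$), contradicting $([x],[y]) \in Y_{\bar k}$. So $J$ has rank $2$ at every point of $Y_{\bar k}$, and therefore $Y_{\bar k}$ — hence $Y$ — is smooth of pure dimension $2n-2$. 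As $Y$ is nonempty (it contains $(W_1, W_n)$ for any eigenline $W_1$ lying in a hyperplane $W_n$), this gives $\dim Y = 2n-2 < 2n-1 = \dim E$, so $Y \subsetneq E$ and $s \neq 0$; hence $Y$, the zero scheme of the nonzero section $s$ on the integral scheme $E$, is an effective Cartier divisor, and the lemma is proved.

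The one genuinely non-formal step is this Jacobian computation, and inside it the only point requiring care is the smoothness criterion on $\proj(V^*)\times\proj(V)$ — the two Euler relations already force the rank of the full Jacobian to be at most the codimension $2$ along $Y_{\bar k}$, so ``rank $= 2$'' is precisely transversality; granting that, the linear algebra is dictated by the eigenvalues being distinct. There is also a coordinate-free version, perhaps cleaner to write out: inside $T_p(\proj(V^*)\times\proj(V)) = \Hom(W_1, V/W_1)\oplus\Hom(W_n, V/W_n)$ the pair of differentials of the two defining sections sends $(\alpha,\beta)$ to $\big(\mathrm{pr}\circ\alpha + \beta\circ\iota,\ \overline{\varphi}\circ\alpha + \beta\circ\varphi|_{W_1}\big) \in \Hom(W_1, V/W_n)^{\oplus 2}$, where $\iota\colon W_1 \mono W_n$, $\mathrm{pr}\colon V/W_1 \epi V/W_n$, and $\overline{\varphi}\colon V/W_1 \to V/W_n$ is induced by $\varphi$ (all well defined because $p = (W_1, W_n) \in Y$ means $W_1 \subseteq W_n$ and $\varphi W_1 \subseteq W_n$); a nontrivial linear relation between the two components forces $W_1$ to be an eigenline $E_\lambda$ and $W_n = \im(\varphi - \lambda)$, a complement to $E_\lambda$, contradicting $W_1 \subseteq W_n$.
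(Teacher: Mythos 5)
Your proof is correct and takes essentially the same approach as the paper: diagonalize $\varphi$ over $\bar k$, realize $Y$ as the intersection of the two bidegree-$(1,1)$ forms $\sum_i x_i y_i$ and $\sum_i \lambda_i x_i y_i$ in $\proj(V^*)\times\proj(V)$, and use distinctness of the eigenvalues to force the Jacobian to have rank $2$ at every point of $Y$. You compute the rank of the full biprojective Jacobian in a single step (after the Euler-relation reduction), whereas the paper exhibits an explicit nonsingular $2\times 2$ minor on each chart $U_{ij}$ with $i\neq j$ after first checking that those charts cover $Y$; you are also slightly more explicit about producing the section $s$ and checking $Y \subsetneq E$, but the core computation is the same.
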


Note that this implies $Y$ is very ample, hence the terminology ``hyperplane section".
\begin{proof}
  By the construction of $Y$, if it is a divisor on $E$, then it is of $\oh_{\proj(\cg{E})}(1)$-type. Let $\alpha_0, \dots, \alpha_n$ be the 
  $n+1$ distinct eigenvectors of $\varphi$. We may assume $k$ algebraically closed, so in particular we have $\alpha_i \in k$. 
  Choosing a basis $y_0, \dots, y_n \in V$ diagonalizing $\varphi$, and letting $x_0, \dots, x_n \in V^*$ be the dual basis of the $y_i$, 
  we see that $Y$ is cut out by the polynomials \[ \sum_{i=0}^n x_i y_i = 0, \qquad \sum_{i=0}^n \alpha_i x_i y_i = 0\] in $P = \proj(V^*) \times_k \proj(V)$. 
  
  We proceed by applying the Jacobian criterion to
  $Y \subseteq P$ away from $x_i y_j = 0$. For $0 \leq i \neq j \leq n$, let $U_{i j} \subseteq P$ denote the open set of points where $x_i y_j \neq 0$.
  It is affine since $\oh(1,1)$ is very ample. Thus $U_{i j} \cong \aff^{2n-2}$, with coordinate ring
  $k[x'_0, \hdots, x'_n, y'_0, \hdots, y'_n]$, $x'_l = \frac{x_l}{x_i}, y'_l = \frac{y_l}{y_j}$. The $U_{i j}$ cover $Y$ since if there is only one $i$ with 
  $x_i$ or $y_i$ non-zero at a point of $Y$, then $\sum_{l=0}^n x_l y_l \neq 0$.
  $Y \cap U_{ij}$ is given by the equations $ \sum_{l=0}^n x'_l y'_l$ and $ \sum_{l=0}^n \alpha_i x'_l y'_l$ so the matrix
  $$ \begin{pmatrix}
    y'_j & x'_i \\ \alpha_j y'_j & \alpha_i x'_i
  \end{pmatrix} $$ appears as a $2 \times 2$ submatrix of the Jacobian matrix, (with $y'_j \neq 0$ and $x'_i \neq 0$ by definition). 
  Distinctness of the $\alpha_l$ shows that this matrix
  is non-singular. Thus the rank of the Jacobian matrix is 2, so $Y$ is a smooth divisor on $E$. 
\end{proof}

\begin{lemma} \label{compute_Z}
  The reduced zero locus $Z$ of the $s$ of the previous lemma in $\proj(V^*)$ is isomorphic to $\Spec L$.
\end{lemma}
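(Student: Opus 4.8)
The plan is to identify $Z$ with the scheme of $\varphi$-stable lines in $V$ and then to match this with $\Spec L$, using that $\varphi$ has distinct eigenvalues (so that its characteristic polynomial $\chi$ is separable). First I would make the section $s$ explicit. In our conventions a point of $\proj(V^*)$ is a line $\ell\subseteq V$, with tautological sub-line-bundle $T := \oh_{\proj(V^*)}(-1)\mono\underline{V}$, so that $\cg{E} = (\underline{V}/T)\otimes T^{\vee}$ (with $T^{\vee} = \oh_{\proj(V^*)}(1)$) and a global section of $\cg{E}$ is the same as a morphism of vector bundles $T\to\underline{V}/T$. Tracing through the definition of $Y$ via the twisted pairing $(v,f)\mapsto f(\varphi v)$, the section $s$ is the composite $T\mono\underline{V}\nra{\varphi}\underline{V}\epi\underline{V}/T$. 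Consequently the scheme-theoretic vanishing locus $Z(s)\subseteq\proj(V^*)$ is exactly the locus of lines $\ell$ with $\varphi(\ell)\subseteq\ell$.

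Next I would check that $Z(s)$ is already reduced (so that $Z = Z(s)$) and in fact finite étale of degree $n+1$ over $k$. This is a local computation after base change to $\bar{k}$, entirely parallel to the proof of the previous lemma: choosing an eigenbasis $y_0,\dots,y_n$ of $V$ with dual basis $x_0,\dots,x_n$, in the standard affine chart of $\proj(V^*)$ around $[x_i]$ (with coordinates $u_j$, $j\neq i$) the tautological line is spanned by $y_i + \sum_{j\neq i}u_j y_j$, and a short calculation shows that $s$ there has components $(\alpha_j-\alpha_i)u_j$ in a suitable trivialisation, with Jacobian at the origin diagonal and with the nonzero entries $\alpha_j-\alpha_i$. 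Hence $Z(s)\otimes_k\bar{k}$ is the disjoint union of the $n+1$ reduced points $[x_i]$; in particular $Z$ is geometrically reduced, proper and zero-dimensional, hence finite étale over $k$.

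Now I would construct the comparison morphism. On $Z$ the line bundle $T|_Z$ is $\varphi$-stable, so $\varphi$ acts on it $\oh_Z$-linearly, i.e.\ as multiplication by a global function $\lambda\in\Gamma(Z,\oh_Z)$. Since $\chi$ is separable with roots the eigenvalues, it equals the minimal polynomial of $\varphi$ and $L = k[\varphi] = k[t]/(\chi)$; as $\chi(\varphi) = 0$ on $V$ it vanishes on $T|_Z$, so $\chi(\lambda) = 0$, and thus $\lambda$ determines a $k$-algebra homomorphism $L\to\Gamma(Z,\oh_Z)$, i.e.\ a $k$-morphism $Z\to\Spec L$. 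Both sides being finite étale of degree $n+1$ over $k$, it suffices to check this is an isomorphism after $-\otimes_k\bar{k}$; there $\Spec(L\otimes_k\bar{k}) = \coprod_i\Spec\bar{k}$ is indexed by the eigenvalues, and the morphism sends $[x_i]$ to the factor on which $\varphi$ has value $\alpha_i$ (because $\varphi y_i = \alpha_i y_i$), so it is a bijection of $n+1$ reduced $\bar{k}$-points, hence an isomorphism.

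The step needing the most care is the bookkeeping of the $k$-structure rather than the geometry: over $\bar{k}$ the statement is trivial, and the content is that the (manifestly Galois-equivariant) bijection between the eigenlines and the eigenvalues of $\varphi$ upgrades to an isomorphism of $k$-schemes---which is why it matters that $Z$ is étale. Alternatively, one can bypass the chart computation by arguing with functors of points: an $R$-point of $Z(s)$ is a $\varphi$-stable line subbundle $\ell\subseteq V\otimes_k R$ with locally free quotient, on which $\varphi$ acts by some $\lambda\in R$ with $\chi(\lambda) = 0$, and conversely such a $\lambda$ recovers $\ell = \ker(\varphi-\lambda\mid V\otimes_k R)$; separability of $\chi$ (so that $\chi'(\lambda)\in R^{\times}$ whenever $\chi(\lambda) = 0$) is precisely what makes this kernel a line subbundle with locally free complement, and identifies $Z(s)$ directly with $\Spec L$.
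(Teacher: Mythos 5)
Your proof is correct, and it takes a genuinely different route from the paper's. The paper proves the lemma under the running Galois hypothesis of Theorem 1.1 ($L = \prod K$ with $K/k$ Galois): it identifies the geometric points of $Z$ as eigenlines, observes $Z \times_k K \cong \coprod \Spec K$, and then runs a Galois/dimension-counting argument (each eigenvalue has $\dim_k K$ conjugates, eigenspaces of conjugate eigenvalues are conjugate, so each factor $K_i$ of $\Gamma(Z,\oh_Z)$ has degree $\dim_k K$) to force $\Gamma(Z,\oh_Z) \cong L$; moreover the paper takes $Z$ reduced by fiat, leaning on Remark 2.1, rather than proving it. You instead (i) identify $s$ explicitly as the composite $T \mono \underline{V} \nra{\varphi} \underline{V} \epi \underline{V}/T$, so $Z(s)$ is the scheme of $\varphi$-stable lines; (ii) verify scheme-theoretic reducedness directly via the Jacobian in a chart; and (iii) build a canonical $k$-morphism $Z \to \Spec L$ from the eigenvalue function $\lambda \in \Gamma(Z,\oh_Z)$, then check it is an isomorphism on $\bar k$-points. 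What your approach buys: it is constructive (an explicit isomorphism rather than an existence-by-counting argument), it shows the scheme-theoretic zero locus is already reduced so no passage to $Z_{\rd}$ is needed, and crucially it proves the stronger statement that $Z \cong \Spec L$ for \emph{any} regular semisimple $\varphi$, with no Galois hypothesis on $L$ at all --- only separability of the characteristic polynomial is used. The paper's approach is shorter once the running hypothesis is in place, and stays purely at the level of $\bar k$- or $K$-points, but it does not yield the more general statement.
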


\begin{proof} 
  Over an algebraic closure $\bar{k}/k$, the closed points of the zero locus of $s$ are those which have a fibre of dimension $n-1$. 
  With homogeneous coordinates $x_i, y_j$ as before, these are the points $[c_0, \dots, c_n]$ such that $\sum_{i=0}^n c_i y_i$ and $\sum_{i=0}^n \alpha_i c_i y_i$ 
  are linearly dependent. Since the $\alpha_i$ are distinct, this happens precisely when $c_i = 0$ for all but one value of $i$.
  In coordinate-free terms, these correspond to the eigenspaces of $\varphi \otimes 1$ in $V \otimes_k \bar{k}$, which are already defined in $\proj(V^*)(K)$. 
  Since $K$ is separable over $k$ and we assume $Z$ reduced, $Z \times_k K$ is reduced, hence $Z \times_k K \cong \coprod_{0 \leq i \leq n} \Spec K$ by the above description 
  of $Z(\bar{k})$.
  This implies in turn that $Z \cong \Spec B$, where $B$ is a reduced finite algebra over $k$ with $\dim_k B = n+1$ and 
  that $B$ is a direct product of field extensions of $k$,  
  $\prod_{1 \leq i \leq m} K_i$, where all $K_i$ have an inclusion into $K$. By definition of $K$, each eigenvalue of $\varphi$ has $\dim_k K$ conjugates. Since eigenspaces 
  of conjugate eigenvalues are conjugate, this implies the orbit of any point of $Z(K)$ under the action of $\mathrm{Gal}(K/k)$ has cardinality $\dim_k K$. 
  Hence, $\dim_k K_i = \dim_k K$ for all $1 \leq i \leq m$, so $K_i \cong K$. Now since $\dim_k B = \dim_k L = n+1$, we see that $B \cong L$, as desired.
\end{proof}

If the eigenvalues of $\varphi$ are in $k$, then $V$ decomposes into one-dimensional eigenspaces $V_i$, $0 \leq i \leq n$. This gives a torus
$T \subseteq \mathrm{GL}(V)$ consisting of the elements which send the $V_i$ into themselves, which acts on $\proj(V^*)$ and
$\proj(V)$ via the trivial and dual representations, respectively. 
These actions are then such that $E$ is $T$-stable under the induced $T$-action on $\proj(V^*) \times_k \proj(V)$, and so is 
$Y$ since $\varphi \otimes 1$ commutes with the elements of $T(\bar{k})$.
From \Cref{compute_Z}, we see that $E|_Z$ consists of $n+1$ copies of $\proj^{n-1}_k$, each of which is $T$-stable. These are precisely the fibres 
$E_i = \pi^{-1}([V_i])$, $0 \leq i \leq n \subseteq Y$. 

\begin{prop} \label{ortho}
  For $0 \leq i, j \leq n$, let $\gamma_i = [E_i] \in A^{n-1}(Y)$. Then $\deg(\gamma_i \cdot \gamma_j) =  \delta_{i j} (-1)^{n-1}$.  
\end{prop}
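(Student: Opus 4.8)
Since $\dim Y = 2n-2$ and $\dim E_i = n-1$, the product $\gamma_i\cdot\gamma_j$ lies in $A^{2n-2}(Y)=A_0(Y)$, so its degree is defined. First I would handle $i\neq j$: the $E_i$ are the fibres of $\pi\colon E\to\proj(V^*)$ over the distinct $k$-points $[V_i]$, hence $E_i\cap E_j=\varnothing$; since $Y$ is smooth, the intersection product of two cycles with disjoint supports vanishes, so $\gamma_i\cdot\gamma_j=0$ in this case.

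For $i=j$ I would use the self-intersection formula. The inclusion $\iota\colon E_i\hookrightarrow Y$ is a regular closed immersion of smooth varieties of codimension $n-1$, so $\gamma_i^2=\iota_*\bigl(c_{n-1}(N_{E_i/Y})\cap[E_i]\bigr)$ and therefore $\deg(\gamma_i^2)=\deg_{E_i}c_{n-1}(N_{E_i/Y})$. To compute the normal bundle, I would apply the exact sequence $0\to N_{E_i/Y}\to N_{E_i/E}\to N_{Y/E}|_{E_i}\to 0$. Here $N_{E_i/E}$ is trivial: as $E_i=\pi^{-1}([V_i])$ is a fibre of the smooth morphism $\pi$ (a projective bundle) over a $k$-point, $N_{E_i/E}\cong(\pi|_{E_i})^*N_{[V_i]/\proj(V^*)}$ is the pullback along $E_i\to\Spec k$ of an $n$-dimensional $k$-vector space, i.e.\ $\oh_{E_i}^{\oplus n}$. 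On the other hand $N_{Y/E}=\oh_E(Y)|_Y=\oh_{\proj(\cg E)}(1)|_Y$; restricting to $E_i$ and using $\oh_{\proj(\cg E)}(1)=\pi^*\oh_{\proj(V^*)}(1)\otimes\pi'^*\oh_{\proj(V)}(1)$, the first factor is trivial because $\pi|_{E_i}$ is constant, while in the eigenbasis coordinates of the two preceding lemmas one sees $E_i=\{[V_i]\}\times\{y_i=0\}$, so $\pi'$ carries $E_i$ isomorphically onto a coordinate hyperplane of $\proj(V)$ and the second factor pulls back to $\oh_{\proj^{n-1}}(1)$. Thus $N_{Y/E}|_{E_i}\cong\oh_{\proj^{n-1}}(1)$.

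Writing $h=c_1(\oh_{\proj^{n-1}}(1))$, Whitney's formula applied to the sequence gives $c(N_{E_i/Y})=c(\oh_{E_i}^{\oplus n})/c(\oh_{\proj^{n-1}}(1))=(1+h)^{-1}=\sum_{\ell=0}^{n-1}(-1)^\ell h^\ell$ in $A^\bullet(\proj^{n-1})$, so $c_{n-1}(N_{E_i/Y})=(-1)^{n-1}h^{n-1}$ and $\deg_{E_i}c_{n-1}(N_{E_i/Y})=(-1)^{n-1}$. Together with the previous case this yields $\deg(\gamma_i\cdot\gamma_j)=\delta_{ij}(-1)^{n-1}$. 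The one step demanding care is the identification $N_{Y/E}|_{E_i}\cong\oh_{\proj^{n-1}}(1)$ — correctly restricting $\oh_{\proj(\cg E)}(1)$ to the fibre $E_i$ — which is why I would carry it out concretely in the coordinates already in play rather than abstractly; the rest is formal, relying only on the projective bundle theorem and the self-intersection formula. (For $n=1$ the normal bundle has rank $0$ and the claim merely reflects that $Y=\Spec L$ is a disjoint union of $n+1$ reduced $k$-points.)
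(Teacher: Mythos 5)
Your proof is correct, and it takes a genuinely different route from the paper's. The paper proves \Cref{ortho} via $T$-equivariant localisation: it first reduces to $k$ algebraically closed, computes the weights of the tangent spaces at the fixed points $z_{ij}$ (\Cref{chars}), derives the localisation formula for the equivariant Poincar\'e pairing (\Cref{eq_ids}), and then evaluates $\langle[E_i],[E_i]\rangle_T$ as a sum of rational functions, identifying it with $(-1)^{n-1}$ via a Lagrange-interpolation observation borrowed from \cite{twistedmilnor}. Your argument instead stays entirely within ordinary intersection theory: the vanishing for $i\neq j$ is immediate because $E_i$ and $E_j$ lie in disjoint fibres of $\pi$, and the self-intersection is computed from $c_{n-1}(N_{E_i/Y})$ using the conormal sequence for $E_i\hookrightarrow Y\hookrightarrow E$, the triviality of $N_{E_i/E}$ (fibre of a smooth morphism over a $k$-point), the identification $N_{Y/E}|_{E_i}\cong\oh_{\proj^{n-1}}(1)$ via the explicit form of $\oh_{\proj(\cg E)}(1)$ and the embedding $E_i\hookrightarrow\proj(V)$ as the coordinate hyperplane $\{y_i=0\}$, and Whitney's formula. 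The steps all check out (in particular the normal bundle computation, which you correctly flag as the delicate point), and the approach is arguably more elementary and self-contained: it avoids the equivariant machinery of \Cref{chars} and \Cref{eq_ids} entirely, works over any $k$ containing the eigenvalues without passing to $\bar{k}$, and requires no external reference for the final numerical identity. What the paper's equivariant method buys is a uniform framework (the fixed points, tangent weights, and localisation formula) that packages the geometry of $Y$ once and for all; what yours buys is transparency and minimal prerequisites for this particular pairing computation.
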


Since the degree of a class in the zeroth Chow group of a proper variety is invariant under change of base field, we may assume that $k$ is algebraically closed. This allows 
for the use of \emph{localisation} (\cite[Corollary 2.3.2]{teq}) for $T$-equivariant Chow groups to prove the proposition. 
To this end, we first gather some facts about the $T$-action on $Y$. 

For each $0 \leq i \leq n$, 
we have a homomorphism $t_i : T \to \mathrm{GL}(V_i) = \mathbb{G}_m$. The $t_i$ generate the character group $M$ of $T$, and we write $\chi_{i j}$ for $t_j - t_i \in M$.
We let $R = \Sym_\Z M = \Z[t_0, \dots, t_n]$ 
and $Q$ be the field of fractions of $R$. For $0 \leq i \neq j \leq n$, let $z_{i j} = ([V_i], [\bigoplus_{0 \leq l \neq j \leq n} V_l]) \in E$. These are also contained in $Y$ 
and $E|_Z$ and are the $T$-fixed points of these varieties.

\begin{lemma} \label{chars}
  The weights of the $T$-module $\tgt_{z_{i j}}(Y)$ are $\chi_{l j}$ and $\chi_{i l}$ for $0 \leq l \leq n$, $l \neq i, j$.
  and the submodule $\tgt_{z_{i j}} (E|_Z) \subseteq \tgt_{z_{i j}}(Y)$ is spanned by the weight spaces of $\chi_{l j}$, $0 \leq l \leq n, l \neq i, j$.
\end{lemma}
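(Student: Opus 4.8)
The plan is to reduce everything to an explicit calculation in local coordinates near the fixed point $z_{ij}$, using the affine charts $U_{ij}$ introduced in the smoothness lemma for $Y$. First I would recall that $z_{ij} = ([V_i],[\bigoplus_{l\neq j}V_l])$ lies in the chart $U_{ij}\cong\aff^{2n-2}$ with coordinates $x'_l = x_l/x_i$ ($l\neq i$) and $y'_l = y_l/y_j$ ($l\neq j$); here the point $z_{ij}$ is the origin, since $[V_i]$ is the locus where all $x'_l$ vanish and $[\bigoplus_{l\neq j}V_l]$ is the locus where all $y'_l$ vanish. On this chart the torus $T$ acts linearly: an element $t\in T$ with $t_l(t)=\lambda_l$ scales $x_l$ by $\lambda_l$ (trivial representation on $\proj(V^*)$) and $y_l$ by $\lambda_l^{-1}$ (dual representation on $\proj(V)$), wait — one must be careful about which normalisation the paper uses, so I would pin down signs by the convention that $\proj(V^*)$ carries the trivial representation and $\proj(V)$ the dual; then $x'_l = x_l/x_i$ has weight $t_l - t_i = \chi_{il}$ and $y'_l = y_l/y_j$ has weight $-(t_l-t_j) = t_j - t_l = \chi_{lj}$. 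So the ambient tangent space $\tgt_{z_{ij}}(P)$ has weights $\{\chi_{il}: l\neq i\}\cup\{\chi_{lj}: l\neq j\}$.

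Next I would cut down by the two defining equations of $Y$ inside $U_{ij}$, namely $q_1 = \sum_l x'_l y'_l$ and $q_2 = \sum_l \alpha_l x'_l y'_l$ (after dividing by the monomial $x_i y_j$, which only shifts weights uniformly and can be tracked). Each $q_m$ vanishes to order $\geq 2$ at the origin except for the terms involving the ``missing'' coordinates: since $x'_i = 1$ and $y'_j = 1$ identically on the chart, the linear parts of $q_1$ and $q_2$ at $z_{ij}$ are $y'_i + x'_j$ and $\alpha_i y'_i + \alpha_j x'_j$ respectively (here $y'_i := y_i/y_j$ and $x'_j := x_j/x_i$ are genuine coordinates since $i\neq j$). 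Because $\alpha_i\neq\alpha_j$, these two linear forms are independent, so the differentials $dq_1, dq_2$ at $z_{ij}$ span the two-dimensional subspace of the cotangent space dual to $\mathrm{span}(\partial_{y'_i},\partial_{x'_j})$; equivalently the tangent space $\tgt_{z_{ij}}(Y)$ is the kernel, which is the span of the coordinate directions $\partial_{x'_l}$ and $\partial_{y'_l}$ for $l\neq i,j$ — that is, exactly the weight spaces $\chi_{il}$ and $\chi_{lj}$ for $0\leq l\leq n$, $l\neq i,j$, as claimed. (The uniform weight shift from dividing by $x_i y_j$ must cancel here; this is automatic because $\tgt_{z_{ij}}(Y)$ is computed intrinsically, independent of the trivialisation chosen, so I would phrase the argument so that only the ratios of weights, i.e. differences of characters, enter.)

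For the second assertion I would use the description from \Cref{compute_Z} and the paragraph following it: $E|_Z = \coprod_i E_i$ with $E_i = \pi^{-1}([V_i]) \cong \proj^{n-1}_k$, and $z_{ij}\in E_i$. On the chart $U_{ij}$, the fibre $E_i$ over $[V_i]\in\proj(V^*)$ is cut out by $x'_l = 0$ for all $l\neq i$ together with the single equation of $E$, which becomes $y'_i + x'_j = 0$ once the $x'_l$ are killed — but we must instead keep $E_i$ as a subvariety of $Y$ and note that on $Y$ it is simply $\{x'_l = 0 : l\neq i\}$ (the $E$-equation $q_1$ is then automatically satisfied). Hence $\tgt_{z_{ij}}(E_i)$ is spanned by $\partial_{y'_l}$ for $l\neq i,j$ together with $\partial_{y'_i}$; but $\partial_{y'_i}$ is not tangent to $Y$ (it pairs nontrivially with $dq_1$), so inside $\tgt_{z_{ij}}(Y)$ the subspace $\tgt_{z_{ij}}(E|_Z) = \tgt_{z_{ij}}(E_i)$ is exactly the span of $\partial_{y'_l}$, $l\neq i,j$, i.e. the weight spaces of $\chi_{lj}$, $0\leq l\leq n$, $l\neq i,j$. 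The one subtlety to double-check is that $\partial_{y'_i}\in\tgt_{z_{ij}}(E_i)$ before intersecting with $Y$: $E_i$ has dimension $n-1$, its tangent space at $z_{ij}$ is $(n-1)$-dimensional, spanned by $\partial_{y'_l}$ for the $n-1$ indices $l\neq j$ (including $l=i$), and intersecting with $\tgt_{z_{ij}}(Y)$ removes the one direction $\partial_{y'_i}$ since it is the unique element of that list killed by the tangent space of $Y$ but not contained in it — a dimension count ($\dim E_i = n-1$, $\dim Y = 2n-3$, and $E_i\subseteq Y$) confirms the intersection is transverse enough that $\tgt_{z_{ij}}(E_i\cap Y) = \tgt_{z_{ij}}(E_i)\cap\tgt_{z_{ij}}(Y)$ has dimension $n-2$, matching.

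The main obstacle I anticipate is bookkeeping the sign/normalisation conventions consistently: which of $\proj(V)$, $\proj(V^*)$ carries the trivial vs.\ dual action, whether $\oh(1)$ on $\proj(V^*)$ contributes, and the uniform weight shift incurred by passing from the bihomogeneous equations on $P$ to the affine equations on $U_{ij}$ (division by $x_i y_j$). None of these affects the final answer because the weights of a tangent space are intrinsic, but getting a clean writeup requires fixing one convention at the outset — I would state explicitly that $x_l$ has weight $t_l$ and $y_l$ has weight $-t_l$, deduce that $x'_l$ has weight $\chi_{il}$ and $y'_l$ has weight $\chi_{lj}$ on $U_{ij}$, and then everything falls out of the Jacobian computation already performed in the smoothness lemma, now read $T$-equivariantly.
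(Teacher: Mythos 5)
Your first assertion (the weights of $\tgt_{z_{ij}}(Y)$) is proved correctly, and by a genuinely different route from the paper. The paper exhibits $T$-stable curves $C_{lj}$ and $C_{il}$ through $z_{ij}$ and computes the weight of $T$ on each tangent line, then uses $\dim \tgt_{z_{ij}}(Y) = 2n-2$ to conclude the list is complete; you instead read off all $2n$ weights of the chart $U_{ij}$ and cut by the two linear equations $dq_1, dq_2$ spanning $(\partial_{y'_i},\partial_{x'_j})^\vee$. Both work; your version is more self-contained while the paper's sidesteps the coordinate bookkeeping you worried about.

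The second assertion is where your argument goes wrong. You claim that inside $U_{ij}$ the fibre $E_i$ is simply $\{x'_l=0:l\neq i\}$ with $q_1=0$ ``automatically satisfied,'' and hence that $\tgt_{z_{ij}}(E_i)$ contains $\partial_{y'_i}$. It does not: on the locus $\{x'_l=0:l\neq i\}$ one has $q_1 = y'_i$, which is not identically zero, so $E_i$ is cut out there by the additional equation $y'_i = 0$. (Geometrically: $E_i$ is the set of hyperplanes $W_n\supseteq V_i$, which in coordinates is $y_i=0$.) Consequently $\tgt_{z_{ij}}(E_i)$ is already the $(n-1)$-dimensional span of $\partial_{y'_l}$, $l\neq i,j$ — no intersection with $\tgt_{z_{ij}}(Y)$ is needed, and $\partial_{y'_i}$ was never a tangent vector. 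Your closing ``dimension check'' compounds this: you write $\dim Y = 2n-3$ (it is $2n-2$, consistent with your own first paragraph which found $2n-2$ weights) and conclude $\tgt_{z_{ij}}(E_i\cap Y)$ has dimension $n-2$, but $E_i\subseteq Y$ forces $E_i\cap Y = E_i$ of dimension $n-1$. These numbers contradict one another and the correct count, so the check does not validate the claim; the argument lands on the right span only because the spurious direction $\partial_{y'_i}$ happens to be the one removed. Replacing the description of $E_i$ with the correct one ($x'_l=0$ for $l\neq i$ together with $y'_i = 0$) makes the second part immediate and removes the need for any intersection argument at all.
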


\begin{proof}
  For fixed $i \neq j$, for any $l \neq i, j$, the codimension $2$ subspace $\bigoplus_{0 \leq s \leq n, s \neq j, l} V_s \subseteq V$ corresponds to a $T$-stable line $L_{l j} 
  \subseteq \proj(V)$. Clearly $C_{l j} = \{ [V_i] \} \times L_{l j} \subseteq Y$ is $T$-stable and it is an easy computation that $T$ acts on 
  $\tgt_{z_{ij}}(C_{l j})$ by $\chi_{l j}$. Similarly, 
  one defines a line $L_{i l} \subseteq \proj(V^*)$ corresponding to $V_i \oplus V_l$, and sets $C_{i l} = L_{i l} \times \{ [\bigoplus_{0 \leq s \neq j \leq n} V_s] \} 
  \subseteq Y$.\footnote{The $T$-stable curves used in the proof are given in \cite[\S 3.1]{bp} in the case where $E$ is any adjoint variety.}
  Once again, it is easily verified that $T$ acts on $\tgt_{z_{i j}}(C_{i l})$ by $\chi_{i l}$. These are all weights of $\tgt_{z_{i j}}(Y)$ by the canonical 
  inclusions of the tangent spaces of the $T$-stable curves, and they make up all weights since
  $\dim_k \tgt_{z_{ij}}(Y) = 2n -2$. The characterisation of $\tgt_{z_{i j}}(E|_Z)$ follows since
  $\dim_k  \tgt_{z_{i j}}(E|_Z) = n-1$ and each of the $C_{l j}$ is contained in $E|_Z$.  
\end{proof}

We denote the $T$-equivariant Chow ring of a smooth $T$-variety $X$ by $A^\bullet_T(X)$ (for a general reference on equivariant intersection theory, see \cite{eq_int}). 
We write $\bar{\alpha}$ for the image of an element $\alpha$ under the forgetful map $A_T^\bullet(X) \to A^\bullet(X)$.
If $X$ is proper over $k$ with structure morphism $p$, 
we have the equivariant Poincaré pairing $\langle \cdot , \cdot \rangle_T: A_T^\bullet(X) \times A_T^\bullet(X) \to A_T^\bullet(\Spec k) = R$ defined by 
$(\alpha, \beta) \mapsto p_*(\alpha \cdot \beta)$. Notice that 
$\overline{\langle \alpha, \beta \rangle}_T = \deg(\bar{\alpha} \cdot \bar{\beta})$ by the naturality of the forgetful map (we extend $\deg$ to all of $A^\bullet(X)$ by 
setting it to $0$ for cycles of dimension greater than $0$. This extension is of course just $p_*$). 

Let $x \in X$ be a $T$-fixed point such the weights $\chi_1, \dots, \chi_m$ of $\tgt_x(X)$ are non-zero. 
Following \cite[Theorem 4.2]{teq}, we define the \emph{equivariant multiplicity}
of a cycle $\alpha \in A_T^\bullet(X)$ $e_{x, X}(\alpha)$ to be the image of $\alpha$ by the unique $R$-linear map $e_{x, X} : A_T^\bullet(X) \to Q$ such that 
$e_{x, X}([x]) = 1$ and $e_{x, X}([X']) = 0$ for any $T$-invariant subvariety $X' \subseteq X$ which does not contain $x$. The smoothness of $X$ implies that 
$e_{x, X}([X]) = ( \prod_{1 \leq i \leq m} \chi_i )^{-1}$.
Moreover, for smooth $X'$, 
$e_{x, X}([X']) = e_{x, X'}([X'])$. 

\begin{lemma} \label{eq_ids}
  For $\alpha \in A_T^\bullet(Y)$ and $0 \leq i \neq j \leq n$, let $\alpha_{i j}$ be the pullback of $\alpha$ by the inclusion $\{ z_{ij} \} \mono Y$.  
  We have the following identities:
  \begin{equation} e_{z_{i j}, Y}(\alpha) = \frac{ \alpha_{i j} } {\prod_{l \neq i, j} \chi_{i l} \chi_{l j}}  \end{equation}
  \begin{equation} \langle \alpha, \beta \rangle_T = \sum_{0 \leq i \neq j \leq n} \frac{\alpha_{i j} \beta_{i j}}{\prod_{l \neq i, j} \chi_{i l} \chi_{l j} } \end{equation} 
\end{lemma}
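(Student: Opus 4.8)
The plan is to derive both identities from the localisation theorem for $T$-equivariant Chow groups, using the equivariant multiplicities at the $T$-fixed points $z_{ij}$ computed in \Cref{chars}. First I would recall the precise localisation statement from \cite[Corollary 2.3.2]{teq}: since $k$ is algebraically closed and $Y$ is a smooth proper $T$-variety with finitely many fixed points (the $z_{ij}$, $0 \le i \ne j \le n$), the pushforward $p_* : A_T^\bullet(Y) \to R$ factors through the rationalisation and is computed by the formula $p_*(\xi) = \sum_{i \ne j} e_{z_{ij}, Y}(\xi)\,\xi_{ij}$, or more precisely the Atiyah–Bott–Berline–Vergne type formula $p_*(\xi) \otimes 1 = \sum_{i \ne j} \iota_{ij}^*(\xi) / \mathrm{euler}_T(\tgt_{z_{ij}} Y)$ in $Q$, where $\iota_{ij}$ is the inclusion of the fixed point. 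The Euler class $\mathrm{euler}_T(\tgt_{z_{ij}} Y)$ is the product of the weights of $\tgt_{z_{ij}}(Y)$, which by \Cref{chars} is exactly $\prod_{l \ne i, j} \chi_{il}\,\chi_{lj}$.

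For identity (1), I would simply unwind the definition of the equivariant multiplicity $e_{z_{ij}, Y}$: it is the unique $R$-linear map sending $[z_{ij}] \mapsto 1$ and killing $T$-invariant subvarieties not containing $z_{ij}$, and \cite[Theorem 4.2]{teq} identifies it with $\xi \mapsto e_{z_{ij}, Y}(\xi)$ expressible as $\iota_{ij}^*(\xi)$ divided by the product of the tangent weights at $z_{ij}$ — using $e_{z_{ij}, Y}([Y]) = (\prod \chi)^{-1}$ and compatibility of pullback to the fixed point with the multiplicity map. Substituting the weight computation of \Cref{chars} gives precisely $e_{z_{ij}, Y}(\alpha) = \alpha_{ij} / \prod_{l \ne i,j} \chi_{il}\chi_{lj}$. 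For identity (2), I would apply the localisation formula to $\xi = \alpha \cdot \beta$, note that $\iota_{ij}^*$ is a ring homomorphism so $(\alpha\beta)_{ij} = \alpha_{ij}\beta_{ij}$, and observe that $\langle \alpha, \beta \rangle_T = p_*(\alpha \cdot \beta)$ by definition; since $R$ is a domain the equality in $Q$ forces the equality in $R$ once one checks the right-hand side lies in $R$ (which it does, being a value of $p_*$).

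The main obstacle I anticipate is matching conventions: the reference \cite{teq} may phrase localisation in terms of equivariant multiplicities rather than the Euler-class form, so I would need to carefully invoke the stated fact that $e_{x,X}([X]) = (\prod_i \chi_i)^{-1}$ together with $R$-linearity and the reduction that any $\xi \in A_T^\bullet(Y)$ can, after inverting the relevant characters, be written in terms of the classes $[z_{ij}]$ and $[Y]$ — so that $e_{z_{ij},Y}(\xi)$ is determined by $\iota_{ij}^*\xi$ and the tangent weights. A secondary point requiring care is the claim that the two sides of (2) agree as elements of $R$ and not merely of $Q$; this follows because the localisation map $A_T^\bullet(Y) \to A_T^\bullet(Y^T) = \bigoplus R$ is injective after inverting finitely many characters, and $p_*$ of the genuine class $\alpha \cdot \beta$ lands in $R$. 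Everything else is bookkeeping with the weights already extracted in \Cref{chars}.
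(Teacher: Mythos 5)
Your proposal takes essentially the same route as the paper: both identities are obtained from localisation for $T$-equivariant Chow groups combined with the tangent-weight computation of \Cref{chars}, and identity (2) follows from (1) by applying $p_*$ to the fixed-point expansion of $\alpha\cdot\beta$ and noting $p_*\circ\iota_{ij*}$ is the identity. The one place where the paper does more work than you acknowledge is in establishing identity (1): the paper expands both $[Y]$ and $\alpha$ via the localisation formula from \cite[Corollary~4.2]{teq}, writes $\alpha=\alpha\cdot[Y]$, applies the projection formula $\alpha\cdot\iota_*(\,\cdot\,)=\iota_*(\iota^*\alpha\cdot(\,\cdot\,))$, and then invokes injectivity of $\iota_*$ over $Q$ (\cite[Corollary~2.2]{teq}) to compare coefficients. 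Your appeal to ``compatibility of pullback to the fixed point with the multiplicity map'' is gesturing at exactly this projection-formula-plus-injectivity step but leaves it implicit, and \cite[Theorem~4.2]{teq} by itself only gives $e_{x,X}([X])=(\prod\chi)^{-1}$, not the general formula for an arbitrary class; so that step should be made explicit rather than attributed to the reference.
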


\begin{proof}
  For (1), let $\iota_{i j}: \{ z_{i j} \} \mono Y$, $\iota: Y^T \mono Y$ be the obvious inclusion maps. By \cite[Corollary 4.2]{teq} and \Cref{chars}, 
  \[ [Y] = \sum_{0 \leq i \neq j \leq n} \frac{1}{\prod_{l \neq i, j} \chi_{i l} \chi_{l j} } [z_{i j}], \qquad 
    \alpha = \sum_{0 \leq i \neq j \leq n} e_{z_{i j}, Y}(\alpha) [z_{ij}]  \] in $A_T^\bullet(Y) \otimes_R Q$.
    Using the identification $A_T^\bullet(Y^T) \otimes_R Q = \bigoplus_{0 \leq i \neq j \leq n} Q $ coming from 
    the inclusion of each fixed point into $Y^T$, we can rewrite these equalities as
    \[ [Y] = \iota_*  \left( \frac{1}{\prod_{l \neq i, j} \chi_{i l} \chi_{l j} } \right)_{i j}, \qquad 
    \alpha = \iota_*  \left( e_{z_{ij}, Y}(\alpha) \right)_{i j}  \] But $\alpha = \alpha \cdot [Y]$, so by the projection formula we have 
    \[ \iota_*  \left( \frac{\alpha_{i j}}{\prod_{l \neq i, j} \chi_{i l} \chi_{l j} } \right)_{i j} = \iota_* \left( e_{z_{ij}, Y}(\alpha) \right)_{i j} \] 
    By \cite[Corollary 2.2]{teq}, $\iota_*$ is an isomorphism after tensoring with $Q$. Since $A_T^\bullet(Y^T)$ is free as an $R$-module, the desired equality follows.

    For (2), since $R \subseteq Q$, it is enough to compute after localising. By (1), 
    \[ \alpha \beta = \sum_{0 \leq i \neq j \leq n} \frac{\alpha_{i j} \beta_{i j} }{\prod_{l \neq i, j} \chi_{i l} \chi_{l j}} \iota_{i j *} (1) \] 
    Now, $p_* \circ \iota_{ij *}$ is the identity on $Q$ since $p \circ \iota_{i j}$ is a map of a point to itself. Thus, by linearity we have 
    \[ \langle \alpha, \beta \rangle_T = p_*(\alpha \beta) =  \sum_{0 \leq i \neq j \leq n} \frac{\alpha_{i j} \beta_{i j} }{\prod_{l \neq i, j} \chi_{i l} \chi_{l j}} \]
\end{proof}

\begin{proof}[Proof of Proposition 3.3]
  It is enough to show that for $0 \leq i, j \leq n$, $\langle [E_i], [E_j] \rangle_T = \delta_{i j} (-1)^{n-1}$. By \Cref{chars}, 
  $e_{z_{i j}, Y}([E_i]) = (\prod_{l \neq i, j} \chi_{l j} )^{-1}$. Hence by \Cref{eq_ids}, $\langle [E_i], [E_j] \rangle_T = 0$ when $i \neq j$ (since 
  $E_i$ and $E_j$ share no fixed points) and 
  \[ \langle [E_i], [E_i] \rangle_T = \sum_{s \neq i} \frac{\prod_{l \neq i, s} \chi_{i l} }{ \prod_{l \neq i, s} \chi_{l s}} \] This is seen to be $(-1)^{n-1}$ 
  by the following observation in \cite[Lemma 4.2]{twistedmilnor}: treating $R$ as a polynomial ring in $t_i$ over $\Z[t_0, \dots, \hat{t}_i, \dots, t_n]$, by 
  Lagrange interpolation it is enough to show that the polynomial \[f(t_i) = \sum_{s \neq i} \frac{\prod_{l \neq i, s} \chi_{i l} }{ \prod_{l \neq i, s} \chi_{l s}}\] 
  of degree at most $n-1$ evaluated at $t_j$ for each $j \neq i$ is $(-1)^{n-1}$. Clearly, $\prod_{l \neq i, s} \chi_{i l}$ evaluated at $t_j$ is 0 if $j \neq s$, thus 
  \[ f(t_j) =  \frac{\prod_{l \neq i, j} \chi_{j l}}{\prod_{l \neq i, j} \chi_{l j} } = (-1)^{n-1}  \] for $j \neq i$.
\end{proof}

\begin{prop} \label{universal_inj}
  If all of the eigenvalues of $\varphi$ are in $k$, the map $(j \times \id_S)_* \circ (\pi|_Z \times \id_S)^*$ is injective for any smooth projective variety $S$ over $k$.
\end{prop}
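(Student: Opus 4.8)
The plan is to reduce the injectivity of $(j \times \id_S)_* \circ (\pi|_Z \times \id_S)^*$ to a computation on $E|_Z \times_k S$, where it becomes a concrete statement about a direct sum of projective bundles, and then to detect the relevant classes via the Poincaré pairing on $Y \times_k S$, which we already control by \Cref{ortho}. First I would use that ``base change by $S$'' (as set up in Section 2) is compatible with all the structure maps, so it suffices to treat the case $S = \Spec k$ and keep $S$ only implicitly; concretely, replacing $X, \cg E, s, Y, Z, E$ by their products with $S$ changes nothing in the argument below except notation, because $Z \times_k S \cong \coprod \proj^{n-1}_{?} \times S$ remains a disjoint union of projective bundles over $S$ and $E|_Z$ is unchanged as the relevant ambient. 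So I will argue for $(j)_* \circ (\pi|_Z)^*$ and remark that the same proof applies verbatim after base change.

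Next, recall from \Cref{compute_Z} and the discussion following it that, since all eigenvalues of $\varphi$ lie in $k$, we have $Z = \coprod_{i=0}^n \Spec k$ and $E|_Z = \coprod_{i=0}^n E_i$ with each $E_i \cong \proj^{n-1}_k$, and each $E_i$ sits inside $Y$ as the fibre $\pi^{-1}([V_i])$, a $T$-stable copy of $\proj^{n-1}$. The map $\pi|_Z : E|_Z \to Z$ is then just the structure map of this disjoint union of projective spaces, so $A^\bullet(E|_Z) = \bigoplus_{i=0}^n A^\bullet(\proj^{n-1}_k) = \bigoplus_{i=0}^n \Z[H_i]/(H_i^n)$, and $(\pi|_Z)^* : A^\bullet(Z) \to A^\bullet(E|_Z)$ is the inclusion of the subgroup generated by the classes $[E_i] = 1 \in A^0(E_i)$ — that is, a single $\Z$ in degree $0$ for each $i$. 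Therefore $(j)_* \circ (\pi|_Z)^*$ is injective if and only if the classes $j_*[E_i] = [E_i] \in A^{n-1}(Y)$, $0 \le i \le n$, are $\Z$-linearly independent in $A^{n-1}(Y)$.

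Now I would invoke \Cref{ortho}: $\deg([E_i] \cdot [E_j]) = \delta_{ij}(-1)^{n-1}$. Suppose $\sum_i a_i [E_i] = 0$ in $A^{n-1}(Y)$ with $a_i \in \Z$. Intersecting with $[E_j]$ and taking degrees gives $0 = \deg\!\big( (\sum_i a_i [E_i]) \cdot [E_j] \big) = \sum_i a_i \,\delta_{ij}(-1)^{n-1} = (-1)^{n-1} a_j$, hence $a_j = 0$ for every $j$. So the $[E_i]$ are independent and $(j)_* \circ (\pi|_Z)^*$ is injective. After base change by $S$ the same argument works: $E|_Z \times_k S$ is $\coprod_i (E_i \times S)$, $(\pi|_Z \times \id_S)^*$ picks out the subgroup $\bigoplus_i A^\bullet(S) \cdot [E_i \times S]$ generated over $A^\bullet(S)$ by the fundamental classes of the components, and the pairing computation of \Cref{ortho} goes through over $S$ — indeed the orthogonality $\langle [E_i], [E_j]\rangle_T = \delta_{ij}(-1)^{n-1}$ is an identity of equivariant degrees that is stable under the flat base change $- \times_k S$, and pairing $\sum_i \alpha_i \cdot [E_i \times S]$ (with $\alpha_i \in A^\bullet(S)$) against $[E_j \times S]$ and pushing forward to $S$ returns $(-1)^{n-1}\alpha_j$, forcing all $\alpha_j = 0$.

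The only real subtlety is making sure that $(\pi|_Z \times \id_S)^*$ really does land in the span of the $[E_i \times S]$ with coefficients in $A^\bullet(S)$ and not something larger; this is just the projective bundle theorem applied to the trivial bundle $\proj^{n-1} \times S \to S$ together with the fact that $Z = \coprod \Spec k$ so $Z \times_k S = \coprod S$ and $A^\bullet(Z \times_k S) = \bigoplus_i A^\bullet(S)$. I do not expect any genuine obstacle here; the content is entirely in \Cref{ortho}, which has already been proved, and the rest is bookkeeping about Chow groups of disjoint unions of trivial projective bundles.
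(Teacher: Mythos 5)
Your proposal is correct and follows essentially the same route as the paper's own proof: identify $A^\bullet(Z\times_k S)$ with $\bigoplus_{i} A^\bullet(S)$, observe that the composite sends the generator of the $i$-th summand to $\gamma_i\times 1_S$, and deduce injectivity from $A^\bullet(S)$-linear independence of these classes via the relative Poincar\'e pairing and the orthogonality of \Cref{ortho}. The one place where your write-up is slightly off is the opening remark that compatibility of ``base change by $S$'' with the structure maps lets you ``reduce to $S=\Spec k$''; that is not a valid reduction by itself (injectivity over $k$ does not formally imply injectivity over $A^\bullet(S)$), and in fact you do not use it — you redo the argument with $A^\bullet(S)$-coefficients at the end, which is exactly what the paper does. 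You might simply drop that sentence and go directly to the relative pairing argument, and also be a bit more careful with the phrase ``stable under flat base change'': what is actually used is the identity $\langle\gamma_i\times 1_S,\gamma_j\times 1_S\rangle_S=\deg(\gamma_i\cdot\gamma_j)\cdot 1_S$, a projection-formula computation in $A^\bullet(Y\times_k S)$, not a base change of the equivariant pairing.
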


\begin{proof}
  We have that $ A^\bullet(Z \times_k S) = \bigoplus_{0 \leq i \leq n} A^\bullet(S)$, with the images of the classes of the irreducible components of $Z \times_k S$ under 
  $(j \times \id_S)_* \circ (\pi|_Z \times \id_S)^*$ being the classes $E|_i \times_k S$ in $A^\bullet(Y \times_k S)$, 
  i.e. $\gamma_i \times 1_S$ ($1_S = [S]$), $0 \leq i \leq n$. 
  Note that the homomorphism is $A^\bullet(S)$-linear, so it suffices to show that the $\gamma_i$ are $A^\bullet(S)$-linearly independent 
  in $A^\bullet(Y \times_k S)$. We define a relative Poincaré pairing $A^\bullet(Y \times_k S) \times A^\bullet(Y \times_k S) \to A^\bullet(S) $ by 
  $\langle \alpha, \beta \rangle_S = (p \times \id_S)_* (\alpha \cdot \beta)$, where $p: Y \to \Spec k$ is the structure morphism. Note that this is 
  $A^\bullet(S)$ bilinear. Then for $0 \leq i, j \leq n$, $\langle \gamma_i \times 1_S, \gamma_j \times 1_S \rangle_S = \deg (\gamma_i \cdot \gamma_j) 
  \cdot 1_S = \delta_{i j} (-1_S)^{n-1}$ by \Cref{ortho}. Linear independence follows. 
\end{proof}

\begin{proof}[Proof of Theorem 1.1]

  Applying base change by $K/k$, we obtain the commutative diagram of Cartesian squares: 
\[\begin{tikzcd}[cramped]
	{(Z \times_k K) \times_K (S \times_k K)} && {(E|_Z \times_k K) \times_K (S \times_k K)} && {(Y \times_k K) \times_K (S \times_k K)} \\
	\\
	{Z \times_k S} && {E|_Z \times_k S} && {Y \times_k S}
	\arrow[from=1-1, to=3-1]
	\arrow[from=1-3, to=1-1]
	\arrow[from=1-3, to=1-5]
	\arrow[from=1-3, to=3-3]
	\arrow[from=1-5, to=3-5]
	\arrow["{\pi|_Z \times \id_S}", from=3-3, to=3-1]
	\arrow["{j \times \id_S}", from=3-3, to=3-5]
\end{tikzcd}\] which induces the commutative diagram on Chow rings 
\[\begin{tikzcd}[cramped]
	{A^\bullet(Z_K \times_K S_K)} && {A^\bullet((E|_Z)_K \times_K S_K)} && {A^\bullet(Y_K \times_K S_K)} \\
	\\
	{A^\bullet(Z \times_k S)} && {A^\bullet(E|_Z \times_k S)} && {A^\bullet(Y \times_k S)}
	\arrow[from=1-1, to=1-3]
	\arrow[from=1-3, to=1-5]
	\arrow[from=3-1, to=1-1]
	\arrow["{(\pi|_Z \times \id_S)^*}", from=3-1, to=3-3]
	\arrow[from=3-3, to=1-3]
	\arrow["{(j \times \id_S)_*}", from=3-3, to=3-5]
	\arrow[from=3-5, to=1-5]
\end{tikzcd}\]
where we write $S_K$ for $S \times_k K$. The lefthand vertical map is injective. Indeed, by \Cref{compute_Z}, $Z \cong \Spec L$. Since 
$K \otimes_k L \cong K \otimes_k (\prod_{1 \leq i \leq m} K) \cong \prod_{0 \leq i \leq n} K  $, we need only check that the obvious map 
$\bigoplus_{1 \leq i \leq m}  A^\bullet(S_K) \to \bigoplus_{0 \leq i \leq n} A^\bullet(S_K)$ is injective, which is clear.  
Thus, injectivity of the composite of the top row implies injectivity of the composite of the bottom 
row, i.e. $(j \times \id_S)_* \circ (\pi|_Z \times \id_S)^*$. The injectivity of the top row is exactly \Cref{universal_inj}, in the case of $V \otimes_k K$ and 
$\varphi \otimes 1 \in \End(V \otimes_k K)$, since by definition the eigenvalues of $\varphi \otimes 1$ are in $K$. The theorem then follows from \Cref{crit}.

\end{proof}

\begin{cor} \label{invariant}
  Suppose $\varphi, \varphi' \in \End(V)$ satisfy the hypotheses of \Cref{main_thm}. If the associated varieties $Y$ and $Y'$ are isomorphic, then 
  $L = k[\varphi]$ and $L' = k[\varphi']$ are isomorphic $k$-algebras.
\end{cor}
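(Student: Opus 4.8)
The plan is to recover $L$ up to isomorphism from the way the Chow groups of $Y$ grow under base change. Since $\varphi,\varphi'\in\End(V)$, both constructions use the same $n$, so $\dim_k L=\dim_k L'=n+1$; I would write $L\cong\prod_{i=1}^m K$ and $L'\cong\prod_{i=1}^{m'}K'$ with $K/k$, $K'/k$ Galois, so that $m[K:k]=m'[K':k]=n+1$, and fix an algebraic closure $\bar k$ of $k$ inside which the Galois extensions $K$, $K'$ are canonical subfields. First I would note that an isomorphism $Y\cong Y'$ over $k$ base changes to an isomorphism $Y\times_k F\cong Y'\times_k F$, hence to an isomorphism $A^\bullet(Y\times_k F)\cong A^\bullet(Y'\times_k F)$, for every field extension $F/k$; in particular $\operatorname{rank}_\Z A^\bullet(Y\times_k F)=\operatorname{rank}_\Z A^\bullet(Y'\times_k F)$.

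Next I would compute this rank for $F/k$ finite using \Cref{main_thm} over $F$. For that one must first check that the hypotheses of \Cref{main_thm} are stable under such a base change (taking $F\subseteq\bar k$): $\varphi\otimes 1$ still has $n+1$ distinct eigenvalues, so $F[\varphi\otimes1]=L\otimes_k F$ is étale of dimension $n+1$ over $F$; and since $K/k$ is Galois, $K\otimes_k F\cong(KF)^{[K\cap F:k]}$ with $KF/F$ Galois, so $L\otimes_k F\cong(KF)^{m[K\cap F:k]}$ is again a finite product of copies of a single Galois extension of $F$. Since $Y\times_k F$ is exactly the variety of \Cref{main_thm} attached to $(V\otimes_k F,\varphi\otimes1)$ over $F$, \Cref{main_thm} gives
\[ \M(Y\times_k F)\;\cong\;\bigoplus_{i=0}^{n-2}\M(\proj^n_F)(i)\;\oplus\;\M(\Spec(L\otimes_k F))(n-1). \]
Applying $A^\bullet$, and using that $A^\bullet(\proj^n_F)$ is free of rank $n+1$ while $A^\bullet$ of a finite étale $F$-scheme is free on its set of connected components, I would conclude that $A^\bullet(Y\times_k F)$ is free of rank $r_F=(n-1)(n+1)+m[K\cap F:k]$, and likewise $r'_F=(n-1)(n+1)+m'[K'\cap F:k]$ for $Y'$.

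Finally, from $r_F=r'_F$ for all finite $F/k$ I get $m[K\cap F:k]=m'[K'\cap F:k]$. Taking $F=K$ gives $n+1=m[K:k]=m'[K'\cap K:k]$, and comparing with $m'[K':k]=n+1$ forces $[K'\cap K:k]=[K':k]$, i.e.\ $K'\subseteq K$ in $\bar k$; taking $F=K'$ gives $K\subseteq K'$ by symmetry, so $K=K'$, hence $m=m'$, hence $L\cong L'$ as $k$-algebras. I expect the only genuine difficulty to be the first step of the middle paragraph: the commutative-algebra verification that the hypotheses of \Cref{main_thm} survive finite base change, i.e.\ that $L\otimes_k F$ stays a product of copies of a single Galois extension of $F$ (this is where the specific shape of $L$ is used essentially); everything else is bookkeeping with ranks of Chow groups and the short Galois computation just given.
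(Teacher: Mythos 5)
Your proof is correct, and it takes a genuinely different route from the paper's. The paper stays over the base field $k$ and works directly with motivic Hom-groups: using the decomposition of \Cref{main_thm}, it computes $\Hom(\M(\Spec K')(n-1),\M(Y))$, noting that each summand $\M(\proj^n_k)(i)$ contributes a single $\Z$ while $\M(\Spec L)(n-1)$ contributes $A^0(\Spec K'\times_k\Spec L)$; comparing with the same computation for $Y'$ forces $\Spec L\otimes_k K'$ to have $n+1$ components, hence an embedding $K\hookrightarrow K'$, and symmetry finishes. You instead base change along $F=K$ and $F=K'$, re-apply \Cref{main_thm} over $F$ (after verifying that the hypothesis ``$L$ is a product of copies of a single Galois extension'' survives such base change, via $K\otimes_k F\cong(KF)^{[K\cap F:k]}$ for $K/k$ Galois and $F\subseteq\bar k$), and read off the invariant $m[K\cap F:k]$ from the rank of $A^\bullet(Y\times_k F)$. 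The base-change stability check is exactly where the specific shape of $L$ enters, and your argument there is sound; note that in fact you only invoke $F=K$ and $F=K'$, both separable, so the issue of inseparable $F$ never arises. Both proofs ultimately exploit the summand $\M(\Spec L)(n-1)$: the paper's is shorter because it never leaves $k$, while yours is arguably more elementary, needing only ranks of Chow groups under base change rather than explicit computation of Hom-sets in the category of motives, at the price of the extra stability bookkeeping.
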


\begin{proof}
  By hypothesis, $L = \prod_{i} K$ and $L' = \prod_{j} K'$, $K'/k$ Galois. 
  Since $\dim_k L = \dim_k L'$ it is the same to show $Y \cong Y' \iff K \cong K'$. 
  First, assume $Y \cong Y'$. Then $\Hom(\M(\Spec K')(n-1), \M(Y)) \cong \Hom(\M(\Spec K'), \M(Y))$ as abelian groups. The motive $\M(\proj^n_k)$ decomposes as 
  $\bigoplus_{0 \leq i \leq n} \Z(i) $, so $\Hom(\M(\Spec K)(n-1), \M(\proj^n_k)(m)) = \Hom(\M(\Spec K)(n-1), \Z(n-1)) = \Z $ for all $m \geq 0$. Hence by \Cref{main_thm},
  $\Z^{n-1} \oplus \Hom(\M(\Spec K'), \M(\Spec L)) \cong \Z^{n-1} \oplus \Hom(\M(\Spec K'), \M(\Spec L'))$. By definition, $\Hom(\M(\Spec K'), \M(\Spec L'))$ is 
  $A^0(\Spec K' \times_k \Spec L') = A^0(\coprod_{0 \leq i \leq n} \Spec K' ) = \Z^{n+1}$. Thus, $\Hom(\M(\Spec K'), \M(\Spec L)) \cong \Z^{n+1}$. But this 
  means $\Spec L \times_k K'$ has $n+1$ irreducible components, but since $\dim_k L = n+1$, this must mean $\Spec L$ has a $K'$-point, i.e. there is an embedding $K \mono K'$.
  By reversing the roles of $K$ and $K'$, we see there is also an embedding $K' \mono K$, whence $K \cong K'$.
\end{proof}

\printbibliography

\end{document}